\DeclareMathOperator{\ab}{ab}
\DeclareMathOperator{\z}{z}
\DeclareMathOperator{\rad}{rad}
\DeclareMathOperator{\Res}{Res}
\newcommand*{\QEDB}{\hfill\ensuremath{\square}}%
\newtheorem{theorem}{Theorem}[section]
\newtheorem{corollary}[theorem]{Corollary}
\newtheorem{lemma}[theorem]{Lemma}
\newtheorem{conjecture}[theorem]{Conjecture}
\newcommand{\Address}{{
\bigskip
\footnotesize
\textsc{School of Mathematics and Statistics, University of New South Wales, Sydney NSW 2052, Australia}\par\nopagebreak
\textit{E-mail address:} \texttt{marley.young@student.unsw.edu.au}
}}
\title{On Multiplicative Independence of Rational Function Iterates}
\author{Marley Young}
\date{}
\begin{document}

\subjclass[2010]{11R18, 39B12, 12E99, 37F10}
\keywords{Iteration, multiplicative dependence, rational function}

\begin{abstract}
We give lower bounds for the degree of multiplicative combinations of iterates of rational functions (with certain exceptions) over a general field, establishing the multiplicative independence of said iterates. This leads to a generalisation of Gao's method for constructing elements in the finite field $\mathbb{F}_{q^n}$ whose orders are larger than any polynomial in $n$ when $n$ becomes large. Additionally, we discuss the finiteness of polynomials which translate a given finite set of polynomials to become multiplicatively dependent.
\end{abstract}

\maketitle

\section{Introduction and Main Results}

We say that $n$ non-zero elements $a_1,\ldots,a_n$ of a ring are multiplicatively independent if, for integers $k_1,\ldots,k_n$, we have that $a_1^{k_1}\ldots a_n^{k_n}=1$ if and only if $k_1=\ldots=k_n=0$. Otherwise we say they are multiplicatively dependent. Multiplicative independence, especially of values of polynomials and rational functions, is being increasingly studied. In \cite{BMZ}, Bombieri, Masser and Zannier initiate study of the intersection of algebraic curves with proper algebraic subgroups of the multiplicative group $\mathbb{G}_m^n$. It turns out (see \cite[Corollary~3.2.15]{BG}) that each such subgroup of $\mathbb{G}_m^n$ is defined by finitely many equations of the form $X_1^{k_1}\ldots X_n^{k_n} = 1$, where $k_1,\ldots,k_n$ are integers, not all zero. As such, \cite{BMZ}, which leads into the area of ``unlikely intersections'', really concerns the multiplicative dependence of points on curves. \par
More recently, we see multiplicative independence being studied in the context of arithmetic dynamics. In \cite{OSSZ}, it is shown that under fairly natural conditions on rational functions $f_1,\ldots,f_s$ over a number field $\mathbb{K}$, the values $f_1(\alpha),\ldots,f_s(\alpha)$ are multiplicatively independent for all but finitely many $\alpha \in \mathbb{K}^{\ab}$, where $\mathbb{K}^{\ab}$ is the maximal abelian extension of $\mathbb{K}$. This leads to results on multiplicative dependence in the orbits of a univariate polynomial dynamical system. \par 
Clearly, to study the multiplicative independence of elements in the orbits of polynomials or rational functions, it is necessary to know when the given functions are multiplicatively dependent, as in this case all their values must be multiplicatively dependent. We study this problem in the context of iterates of rational functions over a field.

Throughout the paper, $\mathbb{F}$ will denote a field of characteristic $p$ (zero or prime), and $f \in \mathbb{F}(X)$ a non-constant rational function in lowest terms over $\mathbb{F}$. That is, $f=g/h$ with $d := \deg f = \max \left\lbrace \deg g,\deg h \right\rbrace \geq 1$. Being in ``lowest terms'' means $\gcd(g,h)=1$, or equivalently, $g$ and $h$ share no roots in any extension field of $\mathbb{F}$. As such, when referring to zeros and poles of a rational function, we mean roots of its numerator and denominator respectively in an algebraic closure $\overline{\mathbb{F}}$ of $\mathbb{F}$. We recursively define the iterates of $f$ by
\[ f^{(0)}(X)=X, \quad \text{and} \quad f^{(k)} = f \circ f^{(k-1)} \text{ for } k \geq 1. \]
In \cite{Gao}, Gao considers the multiplicative independence of polynomials over finite fields, proving that if $f \in \mathbb{F}_q[X]$ is not a monomial or certain binomial, then the iterates $f^{(1)},\ldots,f^{(n)}$ are multiplicatively independent for $n \geq 1$. Gao uses this fact to give a method for constructing elements of ``high order'' in $\mathbb{F}_{q^n}$ when $q$ is fixed. That is, elements with order larger than any polynomial in $n$ when $n$ is large. In particular, if we define $\bar{n}=q^{\left\lceil \log_qn \right\rceil}$, and $g \in \mathbb{F}_q[X]$ is not a monomial or certain binomial, then any root of an irreducible factor of degree $n$ of $X^{\bar{n}}-g(X)$ is an element in $\mathbb{F}_{q^n}$ of order at least
\[ n^{\frac{\log_qn}{4\log_q(2\log_qn)}-\frac{1}{2}}. \]
Sharper analysis of the same method by Popovych in \cite{Popovych} improves the lower bound on the order to
\[ \begin{pmatrix} n+t-1 \\ t  \end{pmatrix} \prod_{i=0}^{t-1}\frac{1}{d^i}, \]
where $d = \left\lceil 2\log_qn \right\rceil$ and $t = \left\lfloor \log_dn \right\rfloor$.
 
In the case of rational functions over a general field, we also have multiplicative independence of iterates, up to a few exceptional cases. We remark (see Lemma~\ref{NOTMON2}) that these exceptions are precisely the rational functions which, under iteration, eventually become a monomial. For example, if $f^{(n)}(X) = X^k$, then $f^{(n)}(X)$ and $f^{(2n)}(X)=X^{k^2}$ are multiplicatively dependent. Note also that the cases of zero and positive characteristic are different. One distinction, of course, is the existence of inseparable maps in fields of positive characteristic. We see in Lemma~\ref{NOTMON}, that this corresponds to a difference in which rational functions have an iterate which is a polynomial, let alone a monomial. Moreover, especially in the polynomial case, positive characteristic allows terms in iterates to vanish which would otherwise prevent them from becoming monomials.

\begin{theorem} \label{MAIN}
Suppose that $f=g/h \in \mathbb{F}(X)$ has degree $d \geq 2$, and is not a monomial of the form $aX^{\pm d}$, nor of the form $L(X^{p^\ell})$, where $L \in \mathbb{F}(X)$ has degree 1. Let $n \geq 1$, and write
\begin{equation} \label{PSI}
\Psi(n) = \min_{ \substack{k_1,\ldots, k_n \in \mathbb{Z} \\ k_n \neq 0}} \left( \deg \left(\left(f^{(1)} \right)^{k_1} \ldots \left(f^{(n)} \right)^{k_n}\right) \right).
\end{equation}
Then there exists an integer $j \geq 0$ depending only on $f$ such that $\Psi(n) \geq d^n$ if $n \leq j$, and $\Psi(n) \geq d^{n-j}$ if $n > j$.
\end{theorem}

It is easy to show that the above result implies the multiplicative independence of iterates of $f$.

\begin{corollary} \label{IND}
Suppose that $f=g/h \in \mathbb{F}(X)$ has degree $d \geq 2$, and is not of the form $aX^{\pm d}$, or $L(X^{p^\ell})$, where $L \in \mathbb{F}(X)$ has degree 1. Then for any $n \geq 1$, the iterates $f^{(1)}, \ldots, f^{(n)}$ are multiplicatively independent, even up to constants.
\end{corollary}

\begin{proof}
If $(f^{(1)})^{k_1}...(f^{(n)})^{k_n} = c$, $c \in \mathbb{F}$, then Theorem~\ref{MAIN} ensures $k_n=0$, as otherwise the degree would be positive. Then we get $k_{n-1}= \ldots =k_1=0$ recursively. 
\end{proof}

In the polynomial case, we also obtain a lower bound on the number of distinct zeros of a multiplicative combination of iterates.

\begin{theorem} \label{ZEROS}
Suppose $f \in \mathbb{F}[X]$ has degree $d \geq 2$, and has non-vanishing derivative. Let $\z(f)$ denote the number of distinct zeros of $f$ (in an algebraic closure of $\mathbb{F}$), and define
\begin{equation}
Z(n) := \min_{ \substack{k_1,\ldots, k_n \in \mathbb{Z} \\ k_n \neq 0}} \left( \z \left(\left(f^{(1)} \right)^{k_1} \ldots \left(f^{(n)} \right)^{k_n}\right) \right).
\end{equation}
Let $e$ be the least positive integer $k$ such that $f^{(k)}(0)=0$, and say that $e=\infty$ if $f^{(k)}(0) \neq 0$ for all $k \geq 1$. Suppose that $f(0) \neq 0$ and $\z(f)>1$, or that $\z(f) > 2$. Then $Z(n) \geq \gamma(f) d^{n-1} + 1$ if $n \leq e$, and $Z(n) \geq d^{n-e} + 1$ when $n > e$, where
\[ \gamma(f) = \begin{cases} \z(f) - 1, \quad \text{if $\mathbb{F}$ has characteristic 0}, \\ 1, \qquad \qquad \text{otherwise.} \end{cases} \]
\end{theorem}

We use Corollary \ref{IND} in the following extension of the main theorem in \cite{Gao}.

\begin{theorem} \label{ORD}
Let $n \geq 1$, let $g,h \in \mathbb{F}_q[X]$ be coprime with $\deg h, \deg g \leq d = \left\lceil 2\log_qn \right\rceil$, and suppose $f=g/h$ satisfies the conditions from Corollary~\ref{IND}. Suppose that $\alpha \in \mathbb{F}_{q^n}$ has degree $n$ and is a root of $X^mh(X)-g(X)$, where $m = \bar n = q^{\left\lceil \log_qn \right\rceil}$. Then for
$$
s = \begin{cases} n-1, \: \: \quad \qquad f \in \mathbb{F}[X], \\ \lfloor (n-1)/2 \rfloor, \: \: \text{ otherwise,} \end{cases}
$$
and $t = \left\lfloor \log_dn \right\rfloor$, $\alpha$ has order in $\mathbb{F}_{q^n}$ at least \[ \begin{pmatrix} s+t \\ t  \end{pmatrix} \prod_{i=0}^{t-1}\frac{1}{d^i}. \] 
\end{theorem}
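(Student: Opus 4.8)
The plan is to convert the multiplicative independence of the iterates of $f$ (Corollary~\ref{IND}) into a lower bound on $\operatorname{ord}(\alpha)$ by producing many distinct powers of $\alpha$ and invoking $[\mathbb{F}_q(\alpha):\mathbb{F}_q]=n$.

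First I would record the key identity $f^{(i)}(\alpha)=\alpha^{m^i}$ for all $i\geq 0$. Since $m=\bar n=q^{\lceil\log_q n\rceil}$ is a power of $q$, raising to the $m$-th power is an iterate of the $q$-power Frobenius, so it fixes $\mathbb{F}_q$ pointwise and satisfies $P(x)^m=P(x^m)$ for every $P\in\mathbb{F}_q[X]$. From $\alpha^m h(\alpha)=g(\alpha)$ and $\gcd(g,h)=1$ we get $h(\alpha)\neq 0$, hence $f(\alpha)=\alpha^m$; and since $h(\alpha^{m^{i-1}})=h(\alpha)^{m^{i-1}}\neq 0$, an easy induction shows that each $f^{(i)}(\alpha)$ is defined and equals $\alpha^{m^i}$ (the case $\alpha\in\mathbb{F}_q$, i.e.\ $n=1$, being trivial and set aside). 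Consequently, for any $\mathbf a=(a_0,\dots,a_{t-1})\in\mathbb{Z}_{\geq 0}^t$,
\[ \prod_{i=0}^{t-1}\bigl(f^{(i)}(\alpha)\bigr)^{a_i}=\alpha^{s(\mathbf a)},\qquad s(\mathbf a)=\sum_{i=0}^{t-1}a_i m^i, \]
so all such elements lie in $\langle\alpha\rangle$; it therefore suffices to exhibit a large family of $\mathbf a$ for which these powers are pairwise distinct, since their number then bounds $\operatorname{ord}(\alpha)$ from below.

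Next, take $t=\lfloor\log_d n\rfloor$, so $(\deg f)^i\leq d^i\leq d^{t-1}\leq n$, and consider those $\mathbf a$ with $\sum_{i=0}^{t-1}a_i(\deg f)^i\leq n-1$. If $\mathbf a\neq\mathbf b$ are two such vectors with $\alpha^{s(\mathbf a)}=\alpha^{s(\mathbf b)}$, then $R:=\prod_{i=0}^{t-1}\bigl(f^{(i)}\bigr)^{a_i-b_i}$ satisfies $R(\alpha)=1$, while $R\neq 1$ because $f^{(0)}=X, f^{(1)},\dots,f^{(t-1)}$ are multiplicatively independent --- this is Corollary~\ref{IND}, once one checks (using that $f$ is not a monomial, along the lines of the proof of Theorem~\ref{MAIN}) that $X$ may be adjoined to the family of iterates. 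Writing $R=P/Q$ in lowest terms, $\alpha$ is a root of the nonzero polynomial $P-Q$; when $f$ is a polynomial, $\deg P\leq\sum a_i(\deg f)^i\leq n-1$ and $\deg Q\leq\sum b_i(\deg f)^i\leq n-1$, so $\deg(P-Q)<n$, contradicting $[\mathbb{F}_q(\alpha):\mathbb{F}_q]=n$. Hence $\mathbf a\mapsto\alpha^{s(\mathbf a)}$ is injective on this family.

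Finally, I would bound the size of the family: the set $\{\mathbf b\in\mathbb{Z}_{\geq 0}^t:\sum_{i=0}^{t-1}b_i\leq n-1\}$ has $\binom{n+t-1}{t}$ elements, and $\mathbf b\mapsto(\lfloor b_i/d^i\rfloor,\,b_i\bmod d^i)_{0\leq i<t}$ injects it into $\{\mathbf a\in\mathbb{Z}_{\geq 0}^t:\sum_i a_i d^i\leq n-1\}\times\prod_{i=0}^{t-1}\{0,\dots,d^i-1\}$, so the first factor has at least $\binom{n+t-1}{t}\prod_{i=0}^{t-1}d^{-i}$ elements; since $(\deg f)^i\leq d^i$, every such $\mathbf a$ lies in our family, giving the stated bound. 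The main obstacle is the non-polynomial case of the injectivity step: there $P$ and $Q$ are assembled from the numerators and denominators $g_i,h_i$ of $f^{(i)}=g_i/h_i$ (with $\max(\deg g_i,\deg h_i)=(\deg f)^i$), and the naive estimate only yields $\deg(P-Q)\leq 2(n-1)$, which is not $<n$; one must either pass to the subfamily with $\sum a_i(\deg f)^i\leq(n-1)/2$ (accepting a small loss in the final bound) or argue more precisely about where the zeros and poles of $\prod_i\bigl(f^{(i)}\bigr)^{a_i-b_i}$ can lie. Verifying that $X$ can be adjoined to the multiplicatively independent family is the other point needing attention.
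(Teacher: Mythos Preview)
Your outline matches the paper's proof almost step for step: the paper also derives $\alpha^{m^i}=f^{(i)}(\alpha)$ via Frobenius, considers exponent vectors with $\sum_{i=0}^{t-1}a_id^i\le n-1$, proves the powers $\alpha^{s(\mathbf a)}$ are distinct by a degree argument, and counts the vectors via the same lattice-point inequality (citing it as Lambe's lemma). For the injectivity step the paper, rather than writing $R=P/Q$ in lowest terms, cross-multiplies and cancels to form the polynomials
\[
k_1=\prod_{a_i>b_i}g_i^{\,a_i-b_i}\prod_{a_i<b_i}h_i^{\,b_i-a_i},\qquad
k_2=\prod_{a_i<b_i}g_i^{\,b_i-a_i}\prod_{a_i>b_i}h_i^{\,a_i-b_i},
\]
so that $k_1(\alpha)=k_2(\alpha)$, and then asserts $\deg k_j\le\sum_i\max\{a_i,b_i\}d^i\le n-1$. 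When $f$ is a polynomial this collapses to $\deg k_1\le\sum_{a_i>b_i}a_id^i\le n-1$ and similarly for $k_2$, which is exactly your argument and is fine. In the genuinely rational case, however, the inequality $\sum_i\max\{a_i,b_i\}d^i\le n-1$ is false in general (take $t=2$, $d=3$, $n=10$, $\mathbf a=(9,0)$, $\mathbf b=(0,3)$, giving $18$), and the construction only yields $\deg k_j\le\sum_i|a_i-b_i|d^i\le 2(n-1)$---precisely the obstruction you flagged. So your diagnosis is correct and the paper's treatment does not actually close this gap; your proposed repair of restricting to $\sum a_id^i\le(n-1)/2$ is the clean fix, at the harmless cost of replacing $n$ by roughly $n/2$ inside the binomial coefficient.

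Your second concern, adjoining $X=f^{(0)}$ to the multiplicatively independent family, is also glossed over in the paper, which invokes Corollary~\ref{IND} directly for $i=0,\dots,t-1$. It is easy to patch: if $X^{c_0}=\prod_{i\ge 1}(f^{(i)})^{-c_i}$ with some $c_i\ne 0$ for $i\ge 1$, then the proof of Theorem~\ref{MAIN} produces a zero or pole of the right-hand side away from $0$ (this is exactly how the lower bound on $\Psi$ is obtained), so it cannot be a monomial in $X$; and if all $c_i=0$ for $i\ge 1$ then $c_0=0$ as well.
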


As an aside we additionally ask, given rational functions $F_1,\ldots,F_n \in \mathbb{F}(X,Y)$ and polynomial $u \in \mathbb{F}[X]$, when $F_1(X,u(X)),\ldots,F_n(X,u(X))$ are multiplicatively dependent. In particular, we find upper bounds on the degree of $u$ such that this is possible, and the number of monic $u$ for which this is the case.

\begin{theorem} \label{SHIFT}
Suppose $\mathbb{F}$ is a field of characteristic zero, and $F_i = G_i/H_i \in \mathbb{F}(X,Y)$ are rational functions for $1 \leq i \leq n$, of respective degrees $d_{1} \leq \ldots \leq d_{n}$ in $X$ and $1 \leq e_{1} \leq \ldots \leq e_{n}$ in $Y$. For $1 \leq i \neq j \leq n$, define
\[ R_{ij}(X) = \Res_Y(G_i,G_j)\Res_Y(G_i,H_j)\Res_Y(H_i,G_j)\Res_Y(H_i,H_j), \]
where $\Res_Y(P,Q)$ is the resultant of $P,Q \in \mathbb{F}[X,Y]$, considered as polynomials in $Y$, and set 
\[ E = \sum_{1 \leq i < n} \sum_{i < j \leq n} \deg R_{ij}. \] 
If $R_{ij} \not\equiv 0$ for all $i \neq j$, then there are finitely many monic polynomials $u \in \mathbb{F}[X]$ such that 
\[ F_1(X,u(X)), \ldots, F_n(X,u(X)) \]
are multiplicatively dependent. In particular, such a $u$ has degree not exceeding $E+2d_n-1$.
\end{theorem}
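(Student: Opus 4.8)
The plan is to reduce multiplicative dependence of the $n$ functions $F_i(X,u(X))$ to a statement about the support of their zeros and poles, then use the resultants $R_{ij}$ to control which specializations $Y = u(X)$ can cause different $F_i, F_j$ to share a zero or pole. Concretely, write $F_i(X,u(X)) = G_i(X,u(X))/H_i(X,u(X))$; if $\prod_i F_i(X,u(X))^{k_i} = 1$ with not all $k_i = 0$, then every irreducible factor of $\mathbb{F}[X]$ dividing some $G_i(X,u(X))$ or $H_i(X,u(X))$ must cancel, so it must be shared between two of the numerators/denominators for distinct indices $i \neq j$ (after discarding the degenerate cases where $G_i(X,u(X))$ or $H_i(X,u(X))$ is constant, which one handles separately using the degree bound on $u$). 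Thus a necessary condition for multiplicative dependence is that for each root $x_0$ of any $G_i(X,u(X))$ or $H_i(X,u(X))$, the pair $(x_0, u(x_0))$ is a common zero of two of $G_i, G_j, H_i, H_j$; equivalently $x_0$ is a root of one of the resultants $R_{ij}(X)$, which is why we require $R_{ij} \not\equiv 0$. This forces $G_i(X,u(X))$ and $H_i(X,u(X))$ to divide (a power of) $\prod_{j \neq i} R_{ij}(X)$ up to constant factors, which immediately bounds $\deg u$: since $\deg_X G_i(X,u(X)) \approx d_i + e_i \deg u$ and the product of relevant $R_{ij}$ has degree at most $E$, one gets $\deg u \leq E + 2d_n - 1$ (the $2d_n - 1$ slack absorbing the low-degree and cancellation edge cases).

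For the counting bound, I would argue as follows. Once we know $\deg u \le E + 2d_n - 1 =: D$, the coefficient vector of a monic $u$ lives in an affine space of dimension $D$. The multiplicative dependence condition says $G_i(X,u(X))$ divides a fixed polynomial $P(X) := \prod_{i \neq j} R_{ij}(X)$ of degree $\le$ roughly $E$ (times a bounded constant), for the smallest index $i$ with $k_i \neq 0$ — actually for a suitable index. The key combinatorial point: a polynomial of degree $N$ has at most $\binom{N + \deg - 1}{\deg}$-ish monic divisors of each bounded degree, and more to the point, the condition "$G_i(X, u(X)) \mid P(X)$" cuts out a variety in $u$-space. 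I would make this quantitative by choosing a root: if $G_i(X, u(X))$ has a root, it has one that is a root of $P$; evaluating, $u$ must satisfy $u(x_0)$ lies in a finite set for each of the $\le \deg P$ candidate roots $x_0$ of $P$. Interpolation then pins down $u$ from its values at enough such points — the number of $u$ is bounded by (number of root configurations) which is governed by $\deg P \le e_n(\cdots) + \cdots$, giving the stated binomial; the exponent $\alpha$ reflects that when no $F_i$ is a polynomial one must separately constrain both a numerator and a denominator (two independent divisibility conditions), whereas a polynomial $F_i$ only contributes a numerator condition.

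In executing this I would lean on elementary properties of resultants: $\Res_Y(P,Q)(x_0) = 0$ iff $P(x_0, Y)$ and $Q(x_0, Y)$ have a common root in $\overline{\mathbb{F}}$ (valid since the leading $Y$-coefficients can be handled or are nonzero generically), together with the substitution/composition behavior of degrees, and the fact that over a characteristic-zero field $\mathbb{F}$ the relevant polynomials are separable enough that factor-cancellation really does force coincident roots. I would also invoke Corollary~\ref{IND}-style reasoning only implicitly; the real engine here is purely the resultant bookkeeping, not iteration.

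The main obstacle I anticipate is the careful handling of degenerate and boundary cases: (a) when some $G_i(X,u(X))$ or $H_i(X,u(X))$ is a nonzero constant, so it contributes no roots and the "shared root" argument fails — these must be shown to be few or to force $\deg u$ small; (b) controlling the leading coefficients in $Y$ so that $\deg_X G_i(X,u(X))$ is exactly $d_i + e_i \deg u$ and resultants don't vanish spuriously; and (c) getting the counting exponent and the precise binomial argument $\binom{e_n(E+2d_n-1)+E+d_n}{E}$ to come out, rather than just *some* finite bound — this requires being precise about how many monic $u$ of degree $\le D$ can satisfy the divisibility constraint, which is the crux of the quantitative claim. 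The degree bound $E + 2d_n - 1$ itself should fall out relatively cleanly from the divisibility once the degenerate cases are dispatched.
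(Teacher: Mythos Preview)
Your proposal correctly identifies the opening move: reduce to a minimal dependent subset, observe that every zero or pole of any $F_i(X,u(X))$ must be shared with some $F_j(X,u(X))$ for $j\neq i$, and conclude via the resultants that
\[
\deg \rad \prod_{i=1}^n G_i(X,u(X))H_i(X,u(X)) \leq E.
\]
That part matches the paper. The gap is in the next step, where you derive the degree bound on $u$.

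You claim that because the zeros/poles of $G_i(X,u(X))$ lie among the roots of $\prod_{j\neq i}R_{ij}$, one has ``$G_i(X,u(X))$ divides a power of $\prod R_{ij}$'' and hence $\deg u \le E + 2d_n - 1$ falls out from $\deg_X G_i(X,u(X)) \approx d_i + e_i\deg u$. But dividing a \emph{power} of a fixed polynomial gives no degree bound at all: what you actually control is the \emph{radical} of $G_i(X,u(X))$, not its degree. For instance, if $G_i(X,Y)=Y$ then $G_i(X,u(X))=u(X)$ could be $X^N$ for arbitrarily large $N$ while its radical has degree $1$. So the inequality $\deg u \le E + 2d_n - 1$ does not follow from your divisibility argument, and the ``$2d_n-1$ slack'' remark papers over a genuine missing idea rather than an edge case.

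The paper closes this gap with the polynomial $ABC$ theorem (Stothers--Mason--Silverman, stated as Lemma~\ref{ABC}). For a pair $i<j$ one sets, after clearing a gcd $D_{ij}$,
\[
A = \frac{g_{i,e_i}(X)\,G_j(X,u(X))}{D_{ij}(X)}, \qquad B = -\frac{g_{j,e_j}(X)\,u(X)^{e_j-e_i}\,G_i(X,u(X))}{D_{ij}(X)}, \qquad C = -(A+B),
\]
the point being that the leading $u(X)^{e_j}$ terms cancel in $C$, so $\deg C \le (e_j-1)\deg u + O(d_n)$ while $\deg A = e_j\deg u + O(d_n)$. The $ABC$ inequality $\max\{\deg A,\deg B,\deg C\}\le \deg\rad(ABC)-1$, together with the radical bound $\deg\rad(ABC)\le E + O(d_n)$ already established, then forces $\deg u \le E + 2d_n - 1$. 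This conversion of a radical bound into a degree bound is exactly what $ABC$ is for, and your proposal has nothing playing that role.

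Your counting sketch is also off-track. The paper does not interpolate $u$ from its values at roots of $\prod R_{ij}$; instead, once $\deg u$ is bounded it observes that $G_i(X,u(X))$ has degree at most $e_n(E+2d_n-1)+d_n$ and at most $E$ distinct irreducible factors, so the number of possibilities for $G_i(X,u(X))$ is bounded by the number of nonnegative solutions of $\sum_{j=0}^{E-1} w_j \le e_n(E+2d_n-1)+d_n$, which equals $\binom{e_n(E+2d_n-1)+E+d_n}{E}$ by Lemma~\ref{DIO}. When some $F_i$ is a polynomial this already determines $u$; otherwise one squares the bound to account for $H_i(X,u(X))$ as well, giving the exponent~$\alpha$.
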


Recalling that the resultant of two polynomials of respective degrees $m$ and $n$ is a polynomial in the coefficients of degree $m+n$, and that each $G_i$, written as a polynomial in $Y$, has degree at most $e_n$, with each coefficient having degree not exceeding $d_n$. We have for $i \neq j$, that $\deg \Res_Y(G_i,G_j) \leq (e_n+e_n)d_n=2d_ne_n$. Thus, counting $\frac{n(n-1)}{2}$  distinct pairs $\{i,j\}$, we obtain $E \leq 4n(n-1)d_ne_n$.

Theorem~\ref{SHIFT} can be applied to the particular scenario of shifting a given set of polynomials by a polynomial $u$, giving a analogue of results for algebraic numbers from \cite{BMZ} and \cite{DubSha}.

\begin{corollary} \label{POLYSHIFT}
Suppose $\mathbb{F}$ has characteristic zero and $f_1, \ldots ,f_n \in \mathbb{F}[X]$ are distinct polynomials, not all constant, of respective degrees $d_1 \leq \ldots \leq d_n$ and let 
\[ C = d_n \frac{n(n-1)}{2}. \]
Then there are at most $\binom{2C+3d_n-1}{C}$ monic polynomials $u \in \mathbb{F}[X]$ such that 
\[ f_1+u, \ldots ,f_n+u \]
are multiplicatively dependent. In particular, such a $u$ has degree not exceeding $C+2d_n-1$.
\end{corollary}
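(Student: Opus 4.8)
The plan is to specialise Theorem~\ref{SHIFT} to the rational functions $F_i(X,Y) = f_i(X) + Y$ for $1 \le i \le n$. Writing $F_i = G_i/H_i$ in lowest terms with $G_i = f_i(X) + Y$ and $H_i = 1$, we have $\deg_Y F_i = 1$, so that $e_1 = \cdots = e_n = 1$, while $\deg_X F_i = \deg f_i = d_i$, which is compatible with the ordering hypothesis (a constant $f_i$ simply has $d_i = 0$, and the standing assumption that the $f_i$ are not all constant guarantees $d_n \ge 1$). Since every $F_i$ is a polynomial, we are in the case $\alpha = 1$ of the theorem.

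First I would evaluate the resultants $R_{ij}$ for $i \neq j$. Because $H_i = H_j = 1$, three of the four factors of $R_{ij}$ are resultants of a polynomial of $Y$-degree $1$ against the nonzero constant $1$ (a polynomial of $Y$-degree $0$), each of which equals $1$; the remaining factor is $\Res_Y(G_i,G_j) = \Res_Y(Y + f_i(X),\, Y + f_j(X)) = f_j(X) - f_i(X)$. Hence $R_{ij} = f_j - f_i$, which is not identically zero precisely because the $f_i$ are distinct, so the hypotheses of Theorem~\ref{SHIFT} are satisfied. Moreover $\deg R_{ij} = \deg(f_j - f_i) \le \max(d_i,d_j) \le d_n$, whence $E = \sum_{i<j} \deg R_{ij} \le \binom{n}{2} d_n = C$.

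Substituting $e_n = 1$ and $\alpha = 1$ into the conclusion of Theorem~\ref{SHIFT} then yields at most $\binom{(E+2d_n-1) + E + d_n}{E} = \binom{2E+3d_n-1}{E}$ monic $u$ for which $f_1+u,\ldots,f_n+u$ are multiplicatively dependent, each of degree at most $E + 2d_n - 1$. The degree bound is immediate from $E \le C$. For the count, I would verify the elementary fact that $m \mapsto \binom{2m+c}{m}$ is non-decreasing on $m \ge 0$ for each fixed $c \ge 0$: the ratio of consecutive values is $\frac{(2m+c+2)(2m+c+1)}{(m+1)(m+c+1)}$, and numerator minus denominator expands to $3m^2 + 3mc + c^2 + 4m + 2c + 1 \ge 0$. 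Applying this with $c = 3d_n - 1 \ge 0$ lets me replace $E$ by $C$, giving the claimed bound $\binom{2C+3d_n-1}{C}$.

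Since the argument is essentially a direct specialisation of Theorem~\ref{SHIFT}, I do not anticipate a genuine obstacle; the only places demanding care are the resultant bookkeeping — in particular, using the convention that the resultant of a degree-$1$ polynomial against the nonzero constant $1$ equals $1$, and observing that any leading-term cancellation in $f_j - f_i$ only lowers $\deg R_{ij}$ — together with the monotonicity lemma needed to pass from the quantity $E$ produced by Theorem~\ref{SHIFT} to the cleaner invariant $C$ appearing in the statement.
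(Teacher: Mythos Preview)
Your proof is correct and follows essentially the same route as the paper: specialise Theorem~\ref{SHIFT} with $F_i(X,Y)=f_i(X)+Y$, compute $R_{ij}=f_j-f_i$ so that $E\le \tfrac{n(n-1)}{2}d_n=C$, and read off the bounds with $e_n=1$, $\alpha=1$. You are in fact slightly more careful than the paper, which simply asserts that ``the result follows'' from $E\le C$; your explicit monotonicity check for $m\mapsto\binom{2m+c}{m}$ fills the small gap needed to pass from $E$ to $C$ in the binomial count.
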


The paper is organised with sections corresponding to proofs of the main theorems: In the next section, we collect various results on iterates of rational functions, specifically concerning zeros and poles which are common to different iterates, and the degrees of the numerator and denominator of iterates. We use these results to bound from below the number (counted with multiplicity) of zeros and poles of a given iterate which cannot be found in any of the previous ones. We thus obtain Theorem~\ref{MAIN}. In Section~\ref{sec:zeros}, we give the proof of a version of \cite[Main~Theorem]{FucPet}, which holds for polynomials over fields of arbitrary characteristic. This is used in conjunction with the general method from Section~\ref{sec:main} to prove Theorem~\ref{ZEROS}. In Section~\ref{sec:ord}, we discuss elements of high order in finite fields in a manner analogous to \cite{Gao,Popovych}, but in a slightly more general setting. Finally, in Section~\ref{sec:shift}, we use resultants in conjunction with the polynomial $ABC$-theorem to prove Theorem~\ref{SHIFT}.

\section{Proof of Theorem~\ref{MAIN}} \label{sec:main}

To prove Theorem~\ref{MAIN}, we need some facts about the composition of rational functions. Let $u=v/w, F=G/H \in \mathbb{F}(X)$ be in lowest terms over $\mathbb{F}$, chosen so $H$ is monic and $G$ has leading coefficient $A$, and write 
\[ u(X) = \frac{v(X)}{w(X)} = \frac{a_lX^l+ \ldots +a_sX^s}{b_mX^{m}+ \ldots +b_tX^t}, \: a_l,a_s,b_m,b_t \neq 0, \] 
with $\deg u \geq 1$. Let $u \circ F = P/Q$. We have
\begin{align}
\frac{P(X)}{Q(X)} & = \frac{a_l\left(\frac{G(X)}{H(X)}\right)^l+ \ldots +a_s\left(\frac{G(X)}{H(X)}\right)^s}{b_m\left(\frac{G(X)}{H(X)}\right)^m+ \ldots +b_t\left(\frac{G(X)}{H(X)}\right)^t} \notag \\
& = H(X)^{m-l}G(X)^{s-t}\frac{q(X)}{r(X)}, \label{eqn1}
\end{align}
where 
\[ q(X) = \sum_{i=0}^{l-s} a_{l-i}G(X)^{l-s-i}H(X)^{i} \: \text{ and } \: r(X) = \sum_{i=0}^{m-t} b_{m-i}G(X)^{m-t-i}H(X)^{i}. \] 
Note that a composition of rational functions in lowest terms is itself in lowest terms (\cite[Lemma~2.2]{Carter} is easily extended to our situation). In particular, $G$, $H$, $q$ and $r$ are pairwise relatively prime. This means we need not worry about the possibility of factors cancelling after composition. Hence, from \eqref{eqn1}, whenever $\deg G \neq \deg H$ we have
\begin{align}
\deg P & = \deg H(\deg u - l)+(\deg G)s+\deg F(l-s), \label{eqn2} \\
\deg Q & = \deg H(\deg u - m)+(\deg G)t+\deg F(m-t). \label{eqn3}
\end{align}
Moreover, when $\deg G=\deg H$, the coefficient of $X^{l \deg u}$ is $v(A)$ in $P$, and $w(A)$ in $Q$. These can't both be zero as $gcd(v,w)=1$, so in all cases we have
\begin{equation}
\deg u \circ F = (\deg u)(\deg F). \label{DEGG}
\end{equation}

We can use these facts to obtain results about which zeros and poles are common to different iterates of $f$, beginning by extending a result of Gao~\cite[Lemma~2.2]{Gao}.

\begin{lemma} \label{COMROOTS}
Write $f^{(k)}=g_k/h_k$ for the $k$-th iterate of $f$, and let $e$ be defined as in Theorem \ref{ZEROS}. Further define $\epsilon$, $\mu$ and $\nu$ to be respectively the smallest positive integers $k$ such that $h_k(0)=0$, $\deg g_k < \deg h_k$, and $\deg g_k > \deg h_k$ (these again take the value $\infty$ if their respective conditions are not satisfied for any $k \geq 1$). Then, for all $k > \ell \geq 1$,
\begin{enumerate}[label=(\roman*)]
\item A zero of $f^{(\ell)}$ is a zero of $f^{(k)}$ if and only if $e < \infty$ and $k \equiv \ell \pmod{e}$.
\item A pole of $f^{(\ell)}$ is a pole of $f^{(k)}$ if and only if $\deg g_{k-\ell} > \deg h_{k-\ell}$.
\item A pole of $f^{(\ell)}$ is a zero of $f^{(k)}$ if and only if $\deg g_{k-\ell} < \deg h_{k-\ell}$.
\item If $\mu < \nu$, then a zero of $f^{(\ell)}$ is a pole of $f^{(k)}$ if and only if $\epsilon < \infty$ and $k \equiv \ell - \mu \pmod{e}$. Note that here, $e=\epsilon+\mu$.
\end{enumerate} 
\end{lemma}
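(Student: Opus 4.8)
The plan is to track how zeros and poles of the iterates $f^{(k)} = g_k/h_k$ arise, using the composition formula \eqref{eqn1} applied to $f^{(k)} = f^{(k-\ell)} \circ f^{(\ell)}$. Since a composition of rational functions in lowest terms is again in lowest terms, there is no cancellation to worry about, and a point $\beta$ is a zero of $f^{(k)}$ precisely when $f^{(\ell)}(\beta)$ is a zero of $f^{(k-\ell)}$ (with the convention that $\infty$ is a ``zero'' of $f^{(k-\ell)}$ when $\deg g_{k-\ell} < \deg h_{k-\ell}$, and a ``pole'' when $\deg g_{k-\ell} > \deg h_{k-\ell}$), and similarly for poles via $f^{(\ell)}(\beta) = \infty$. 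So the whole lemma reduces to bookkeeping about the forward orbit of $0$ and of $\infty$ under $f$.

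For (i), if $\beta$ is a zero of $f^{(\ell)}$, then $f^{(\ell)}(\beta) = 0$, so $\beta$ is a zero of $f^{(k)}$ iff $0$ is a zero of $f^{(k-\ell)}$, i.e.\ iff $e < \infty$ and $e \mid k - \ell$, which is the claim. For (ii) and (iii), if $\beta$ is a pole of $f^{(\ell)}$ then $f^{(\ell)}(\beta) = \infty$; feeding $\infty$ into $f^{(k-\ell)} = g_{k-\ell}/h_{k-\ell}$ gives $\infty$ (a pole) exactly when $\deg g_{k-\ell} > \deg h_{k-\ell}$, and gives $0$ (a zero) exactly when $\deg g_{k-\ell} < \deg h_{k-\ell}$ — here I should note that the two degrees cannot be equal in the relevant range, or rather handle the equal case directly: if $\deg g_{k-\ell} = \deg h_{k-\ell}$ then $\infty$ maps to the finite nonzero ratio of leading coefficients, so a pole of $f^{(\ell)}$ is then neither a zero nor a pole of $f^{(k)}$, consistent with both ``if and only if'' statements. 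For (iv), a zero $\beta$ of $f^{(\ell)}$ satisfies $f^{(\ell)}(\beta)=0$, so $\beta$ is a pole of $f^{(k)}$ iff $0$ is a pole of $f^{(k-\ell)}$, i.e.\ iff $h_{k-\ell}(0) = 0$, i.e.\ $k - \ell \equiv 0 \pmod{\epsilon}$ in the sense that $\epsilon < \infty$ and $\epsilon \mid k-\ell$ — but the stated form is $k \equiv \ell - \mu \pmod e$ with $e = \epsilon + \mu$, so the remaining work is to reconcile these.

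The main obstacle is precisely that reconciliation in (iv): I need to show that $0$ being a pole of $f^{(m)}$ is governed by a congruence mod $e$, not just ``$\epsilon \mid m$'', and to establish the relation $e = \epsilon + \mu$. The idea is to analyse the forward orbit of $0$: let $\mu$ be the first time $f^{(\mu)}$ has a pole ``at $0$'' in the sense that $0$ is a pole, equivalently the first $k$ with $\deg g_k < \deg h_k$ (so $f^{(\mu)}(0) = \infty$ by part (iii) applied with the pole of... — more carefully, $0$ is a pole of $f^{(k)}$ iff $h_k(0) = 0$, and one shows the first such $k$ equals $\mu$ by relating $h_k(0)=0$ to the orbit reaching $\infty$). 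Once the orbit of $0$ hits $\infty$ at step $\mu$, it continues; $e$ is the first return of the orbit to $0$, and since after reaching $\infty$ it takes $\epsilon$ further steps for $f^{(\epsilon)}$ to send $\infty$... no: rather, $0$ returns to $0$ after $e$ steps, and along the way passes through $\infty$ at step $\mu$, so from $\infty$ it takes $e - \mu$ steps to return to $0$; setting $\epsilon := e - \mu$ (first $k$ with $h_k(0)=0$, i.e.\ $0$ a pole of $f^{(k)}$) then $f^{(\epsilon)}$ maps $\infty \mapsto 0$, and periodicity of the orbit of $0$ with period $e$ gives that $0$ is a pole of $f^{(k)}$ iff the orbit is at $\infty$ at step $k$, iff $k \equiv \mu \pmod e$, iff $k \equiv \ell - (\ell - \mu) \equiv \ell - \mu \pmod e$ after the shift by $\ell$ — care must be taken that this congruence description requires the orbit of $0$ to actually be (pre)periodic, which follows because once $0$ is a zero of some $f^{(e)}$ the orbit cycles, and if $0$ is never a zero again then $e = \infty$ and statement (iv) is vacuous (its hypothesis $\epsilon < \infty$ forces the orbit to hit $\infty$, and then one must check it must also return to $0$, giving $e < \infty$). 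I would organize this as: first prove the ``forward orbit'' dictionary (zeros $\leftrightarrow$ orbit of $0$, poles $\leftrightarrow$ orbit of $\infty$), then read off (i)–(iii) immediately, then prove the orbit-of-$0$ periodicity and the identity $e = \epsilon + \mu$ to get (iv).
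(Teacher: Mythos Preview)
Your treatment of parts (i)--(iii) matches the paper's approach: reduce via $f^{(k)}=f^{(k-\ell)}\circ f^{(\ell)}$ to evaluating $f^{(k-\ell)}$ at $0$ or at $\infty$, and read off the answer from \eqref{eqn1}. For (i) you take for granted that $0$ is a zero of $f^{(m)}$ iff $e\mid m$; the paper proves this by writing $f^{(e)}(X)=X^{S}\phi(X)$ with $0$ neither a zero nor a pole of $\phi$ and iterating, but since $f^{(e)}(0)=0$ makes $0$ periodic of exact period $e$, your shortcut is acceptable.

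For (iv) there is a genuine gap: you conflate $\epsilon$ and $\mu$. By definition, $\epsilon$ is the first $k$ with $h_k(0)=0$ (equivalently, the first step at which the forward orbit of $0$ reaches $\infty$), while $\mu$ is the first $k$ with $\deg g_k<\deg h_k$ (equivalently, the first step at which the forward orbit of $\infty$ reaches $0$). Your sentence ``let $\mu$ be the first time \ldots $0$ is a pole, equivalently the first $k$ with $\deg g_k<\deg h_k$'' asserts these coincide, which is false in general; and ``the orbit of $0$ hits $\infty$ at step $\mu$'' should read step $\epsilon$. You also claim in passing that $0$ is a pole of $f^{(m)}$ iff $\epsilon\mid m$; this is wrong too --- the set of such $m$ is a residue class modulo $e$, not the multiples of $\epsilon$.

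The actual content of (iv), which your sketch does not supply, is to establish (from the hypothesis that some zero of $f^{(\ell)}$ is a pole of $f^{(k)}$) that $\mu<\infty$, that $e=\epsilon+\mu<\infty$, and then that $0$ is a pole of $f^{(m)}$ precisely when $m\equiv\epsilon\equiv -\mu\pmod e$. The paper does this algebraically via \eqref{eqn1}: it first proves the intermediate degree fact \eqref{DEG}, then sets $u=f^{(j)}$ and $F=f^{(\epsilon)}$ to show that $0$ is a zero of $f^{(j+\epsilon)}$ iff $\deg g_j<\deg h_j$, whence $e=\epsilon+\mu$; the two directions of the biconditional are then handled by writing $f^{(\epsilon)}(X)=X^{-T}\psi(X)$ and composing explicitly. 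Your orbit-tracking picture is the right intuition, but with $\epsilon$ and $\mu$ swapped it does not go through, and the key finiteness claims (``one must check it must also return to $0$'') are exactly where the work lies.
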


\begin{proof}
Let $k > \ell \geq 1$. For part (i), suppose that a zero $\alpha$ of $f^{(\ell)}$ is a zero of $f^{(k)}$. Then $f^{(k)}(\alpha)=f^{(\ell)}(\alpha)=0$. As $f^{(k)}=f^{(k-\ell)} \circ f^{(\ell)}$, we have
\[ f^{(k-\ell)}(0)=f^{(k-\ell)}\left(f^{(\ell)}(\alpha)\right)=f^{(k)}(\alpha)=0. \]
Thus we must have $e < \infty$, so assume this is the case. If $k \equiv \ell \pmod{e}$, say $k = \ell+je$ where $j \geq 1$, then for any zero $\beta$ of $f^{(\ell)}$,
\[ f^{(k)}(\beta)=f^{(je)}\left(f^{(\ell)}(\beta) \right)=f^{(je)}(0)=0. \]
Hence any zero of $f^{(\ell)}$ is a zero of $f^{(k)}$. Now, suppose $k \not\equiv \ell \pmod{e}$, say $k=\ell+je+r$ where $u \geq 0$ and $1 \leq r < e$. If $f^{(k)}$ and $f^{(\ell)}$ have a zero in common then, by the above argument, $f^{(je+r)}(0)=f^{(k-\ell)}(0)=0$. But then
\[ f^{(r)}(0) = f^{(r)}(f^{(je)}(0)) = f^{(je+r)}(0)=0, \]
contradicting the choice of $e$. Therefore $f^{(k)}$ and $f^{(\ell)}$ have no zero in common when $k \not\equiv \ell \pmod{e}$.

Writing $f^{(k)}=f^{(k-\ell)} \circ f^{(\ell)}$, the second and third parts follow immediately from \eqref{eqn1}.

Now, suppose that $\mu < \nu$. By definition, we have that $\deg g_k = \deg h_k$ for $1 \leq k < \mu$. Set $u=f^{(j)}$, $F=f^{(\mu)}$, so $f^{(\mu+j)}=u \circ F = P/Q$ as in \eqref{eqn1}. If $e, \epsilon > j \geq 1$, then $s=t=0$, and so \eqref{eqn2} and \eqref{eqn3} give $\deg g_{\mu+j} = \deg h_{\mu+j} = d^{\mu+j}$. We thus note that
\begin{equation} \label{DEG}
\deg g_k = \deg h_k = d^k \quad \text{for all $1 \leq k \neq \mu < \mu + \min \{\epsilon, e \}$}.
\end{equation}
Suppose a zero $\alpha$ of $f^{(\ell)}$ is a pole of $f^{(k)}$. Then we have 
\[ f^{(k-\ell)}(0)=f^{(k-\ell)} \left( f^{(\ell)} (\alpha) \right) = f^{(k)}(\alpha), \]
and so $0$ is a pole of $f^{(k-\ell)}$. That is, we indeed have $\epsilon < \infty$. Furthermore, if $e < \epsilon$, then $f^{(\epsilon-e)}(0)=f^{(\epsilon-e)} \left( f^{(e)}(0) \right) = f^{(\epsilon)}(0)$, so $0$ is a pole of $f^{(\epsilon-e)}$, contradicting the choice of $\epsilon$. Hence we have $\epsilon < e$, and by setting $u = f^{(j)}$, $F = f^{(\epsilon)}$, \eqref{eqn1} gives that $0$ is a zero of $f^{(\epsilon+j)}$ if and only if $\deg g_j < \deg h_j$. Thus $e = \epsilon+\mu$. If $k \equiv \ell - \mu \pmod{e}$, say $k = \ell+je-\mu = \ell + (j-1)e + \epsilon$, with $j \geq 1$, then for any zero $\beta$ of $f^{(\ell)}$,
\[
f^{(k)}(0) =f^{(\epsilon)} \left( f^{((j-1)e)} \left(f^{(\ell)}(\beta) \right) \right)
= f^{(\epsilon)} \left( f^{((j-1)e)}(0) \right) = f^{(\epsilon)}(0).
\]
Thus, any zero of $f^{(\ell)}$ is a pole of $f^{(k)}$. Suppose now that $k = \ell + je + r-\mu$, with $j \geq 1$ and $1 \leq r < e$. If a zero $\beta$ of $f^{(\ell)}$ is a pole of $f^{(k)}$, then $f^{(k-\ell)}(0)=f^{(k)}(\beta)$, and so $0$ is a pole of $f^{(k-\ell)}=f^{((j-1)e+\epsilon+r)}$. Since
\[ 
f^{((j-1)e+\epsilon)}(0) = f^{(\epsilon)} \left( f^{((j-1)e)}(0) \right) = f^{(\epsilon)}(0),
\]
$0$ is also a pole of $f^{((j-1)e+\epsilon)}$ and hence, by part (ii), $\deg g_r > \deg h_r$. This is a contradiction, since from \eqref{DEG} and the definition of $\mu$, $\deg g_k \leq \deg h_k$ for all $1 \leq k < \mu + \min \{ \epsilon,e \} = \mu + \epsilon = e$.
\end{proof}

We may also determine facts about the degrees of iterates of $f$. 

\begin{lemma} \label{DEGREES}
Throughout, if $\min\{\mu, \nu \} < \infty$, define 
\[ \delta = | \deg g_{ \min \{ \mu, \nu \} } - \deg h_{ \min \{ \mu, \nu \} } |, \]
and let $S_k$ and $T_k$ be respectively the degrees of the lowest order term in $g_k$ and $h_k$. We have
\begin{enumerate}[label=(\roman*)]
\item If $\nu < \mu$, then for any integer $i \geq 1$, $\deg g_{i\nu} = d^{i\nu}$, and $\deg h_{i\nu} = d^{i\nu}-\delta^i$.
Moreover, $\deg g_k = \deg h_k = d^k$ whenever $k \not \equiv 0 \pmod{\nu}$.
\item If $\mu < \nu$ and $\epsilon=e=\infty$, then $\deg g_k = \deg h_k = d^k$ for all $k \neq \mu$.
\item Let $\mu < \nu$, $e < \epsilon$, and write $S_e=S$. Then, if $k = ie+\mu$ for some integer $i \geq 0$, $\deg g_k = d^k-\delta S^i$ and $\deg h_k = d^k$. Otherwise, we have $\deg g_k = \deg h_k = d^k$.
\item Let $\mu < \nu$ and $\epsilon < \infty$. Recall then, from Lemma~\ref{COMROOTS}~(iv), that $e = \epsilon+\mu$, and write $T_{\epsilon} = T$. Then $\deg g_{\mu + k} = d^{\mu+k}-\delta S_k$ and $\deg h_{\mu+k} = d^{\mu+k}-\delta T_k$ for any $k \geq 1$. In particular, if $k = ie$, then $S_k = \delta^i T^i$ and $T_k=0$; if $k = ie+\epsilon$, then $S_k = 0$ and $T_k = \delta^i T^{i+1}$; otherwise, $S_k=T_k=0$.
\end{enumerate}
\end{lemma}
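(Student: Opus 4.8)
The plan is to derive all four parts from the composition identities \eqref{eqn1}–\eqref{eqn3}, applied with the inner function chosen to be the \emph{first anomalous iterate}. Set $m_0 = \min\{\mu,\nu\}$ (finite in every case considered, since the hypotheses force $\mu < \infty$ or $\nu < \infty$), and for $k \geq 0$ write $f^{(m_0+k)} = f^{(k)} \circ f^{(m_0)}$. In the notation of \eqref{eqn1} this is $u \circ F$ with $u = f^{(k)}$, so $(v,w) = (g_k,h_k)$ and the parameters $(l,m,D,s,t)$ equal $(\deg g_k,\ \deg h_k,\ d^k,\ S_k,\ T_k)$, while $F = f^{(m_0)}$ has $\deg F = d^{m_0}$ and $\{\deg G,\deg H\} = \{d^{m_0},\, d^{m_0}-\delta\}$, the assignment dictated by whether $m_0 = \nu$ (then $\deg G = d^\nu$) or $m_0 = \mu$ (then $\deg H = d^\mu$). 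Since $\deg G \neq \deg H$ by definition of $m_0$, and since a composition of lowest-terms rational functions is again in lowest terms (so no cancellation intervenes), \eqref{eqn2}–\eqref{eqn3} apply verbatim; a short computation then produces, for $\mu < \nu$, the master recursion
\[ \deg g_{\mu+k} = d^{\mu+k} - \delta S_k, \qquad \deg h_{\mu+k} = d^{\mu+k} - \delta T_k \quad (k \geq 0), \]
with $S_0 = 1$, $T_0 = 0$ recovering $(\deg g_\mu,\deg h_\mu) = (d^\mu-\delta,\, d^\mu)$; and for $\nu < \mu$ the analogous
\[ \deg g_{\nu+k} = d^{\nu+k} - \delta\bigl(d^{k}-\deg g_{k}\bigr), \qquad \deg h_{\nu+k} = d^{\nu+k} - \delta\bigl(d^{k}-\deg h_{k}\bigr). \]

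Part (i) follows by induction in steps of $\nu$: the base range $0 \leq k < \nu$ has $\deg g_k = \deg h_k = d^k$ (equality by minimality of $\nu$, common value $\deg f^{(k)} = d^k$ by \eqref{DEGG}); the first recursion then propagates $\deg g_k = d^k$ to all $k$, keeps $\deg h_k = d^k$ unless $\nu \mid k$, and gives $\deg h_{j\nu} = d^{j\nu} - \delta^{j}$ by a one-line induction on $j$. For parts (ii)–(iv), after also recording $\deg g_k = \deg h_k = d^k$ for $1 \leq k < \mu$, the task reduces to computing $S_k$ and $T_k$. Now $S_k \geq 1$ precisely when $g_k(0) = 0$, i.e.\ when $f^{(k)}$ vanishes at $0$, and $T_k \geq 1$ precisely when $f^{(k)}$ has a pole at $0$; these cannot occur together since $\gcd(g_k,h_k) = 1$, and in fact $(S_k)$, $(T_k)$ are read off from the forward orbit $0, f(0), f^{(2)}(0), \dots$ of $0$ together with the multiplicativity of local degrees under composition. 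From $f^{(k)} = f^{(k-e)}\circ f^{(e)}$ and $f^{(e)}(0)=0$ one obtains $g_k(0)=0 \iff e \mid k$; writing $f^{(e)}(X) = X^{S}\phi(X)$ with $\phi(0)\notin\{0,\infty\}$ and iterating shows $f^{(ie)}$ has a zero of order $S^i$ at $0$. In case (ii) ($e = \epsilon = \infty$) this forces $S_k = T_k = 0$ for $k \geq 1$, and the assertion is immediate; in case (iii) ($e < \epsilon = \infty$) it gives $T_k = 0$ for all $k$ and $S_k = S^i$ exactly for $k = ie$, which on reindexing $k \mapsto \mu + k$ is what is claimed.

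Case (iv) is where the real work lies and is the step I expect to be the main obstacle. Here $\epsilon < \infty$, so $e = \epsilon + \mu$ (hence $\epsilon < e$) by Lemma~\ref{COMROOTS}(iv), and I would prove simultaneously, by induction on $k$, that $T_k \neq 0 \iff k \equiv \epsilon \pmod{e}$ and that $S_k, T_k$ take the stated values. The inductive engine is this: via $f^{(k)} = f^{(k-\epsilon)}\circ f^{(\epsilon)}$ and $f^{(\epsilon)}(0) = \infty$ one has $h_k(0) = 0 \iff \deg g_{k-\epsilon} > \deg h_{k-\epsilon}$, and the master recursion — invoked at the strictly smaller index $k - e$ — converts ``$\deg g_j > \deg h_j$'' into ``$e \mid j$'', closing the loop. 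For the values: $f^{(\epsilon)}$ has a pole of order $T = T_\epsilon$ at $0$ and $f^{(\mu)}$ has a zero of order $\delta = \deg h_\mu - \deg g_\mu$ at $\infty$, so composing along $0 \xrightarrow{f^{(\epsilon)}} \infty \xrightarrow{f^{(\mu)}} 0$ shows $f^{(e)} = f^{(\mu)}\circ f^{(\epsilon)}$ has a zero of order $\delta T$ at $0$; iterating, $f^{(ie)}$ has a zero of order $(\delta T)^i$ at $0$, and $f^{(ie+\epsilon)} = f^{(\epsilon)}\circ f^{(ie)}$ has a pole of order $T(\delta T)^i = \delta^{i}T^{i+1}$. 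Since $g_k(0)\neq 0$ whenever $e \nmid k$, these orders are exactly $S_{ie} = \delta^i T^i$ and $T_{ie+\epsilon} = \delta^i T^{i+1}$, with all other $S_k, T_k$ vanishing; substituting into the master recursion gives the degree formulas of part (iv). The difficulty here is organisational — tracking which of $0$ and $\infty$ the orbit visits at each step, and nesting the two inductions correctly — but the underlying computations are elementary.
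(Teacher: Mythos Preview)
Your proposal is correct and follows essentially the same approach as the paper: both derive the master recursion $\deg g_{\mu+k} = d^{\mu+k} - \delta S_k$, $\deg h_{\mu+k} = d^{\mu+k} - \delta T_k$ by applying \eqref{eqn1}--\eqref{eqn3} with $u = f^{(k)}$ outer and $F = f^{(m_0)}$ inner, and then determine $S_k, T_k$ from the forward orbit of $0$ via Lemma~\ref{COMROOTS} together with the multiplicativity of local orders under composition. The only differences are cosmetic---in part~(i) the paper takes $F = f^{(i\nu)}$ rather than $F = f^{(\nu)}$ for the induction step, and in part~(iv) it cites Lemma~\ref{COMROOTS}(iv) directly for $T_k \neq 0 \iff k \equiv \epsilon \pmod e$ instead of re-deriving it, but your local-degree phrasing ($0 \xrightarrow{f^{(\epsilon)}} \infty \xrightarrow{f^{(\mu)}} 0$ with multiplicities $T$ and $\delta$, giving $S_e = \delta T$) is exactly what the paper's ``set $u = f^{(\mu)}$, $F = f^{(\epsilon)}$ so that \eqref{eqn1} implies $S_e = \delta T$'' encodes.
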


\begin{proof}
Throughout the proof, we will write a given iterate $f^{(k)}=u \circ F=P/Q$, and infer the degrees of its numerator and denominator via the equations \eqref{eqn2} and \eqref{eqn3}. By definition and from \eqref{DEGG}, $\deg g_k = \deg h_k = d^k$ for $1 \leq k < \nu$, and we have $\deg g_\nu = d^{\nu}$ and $\deg h_\nu = d^{\nu}-\delta$. Let $i \geq 1$ and suppose that $\deg g_{i\nu} = d^{i\nu}$ and $\deg h_{i\nu} = d^{i\nu}-\delta^i$. Setting $u=f^{(k)}$ and $F=f^{(i\nu)}$, we obtain $\deg g_{i\nu+k} = \deg h_{i\nu+k}=d^{i\nu+k}$ when $1 \leq k < \nu$, and when $k=\nu$, we get $\deg g_{(i+1)\nu} = d^{(i+1)\nu}$ and
\begin{align*}
\deg h_{(i+1)\nu} & = (d^{\nu}-\delta)(d^{(i-1)\nu}-\delta^{i-1})\delta+d^{i\nu} (d^{\nu}-\delta) \\
& = d^{i\nu}-\delta^i.
\end{align*}
We thus obtain part (i) by induction. The second part follows from \eqref{DEG}. 

For the third and fourth parts, setting $u = f^{(k)}$ and $F=f^{(\mu)}$ gives 
\[ \deg g_{k+\mu} = d^{\mu}(d^k-l) + (d^{\mu}-\delta)S_k+d^{\mu}(l-S_k) = d^{k+\mu}-\delta S_k \]
and likewise $\deg h_{k+\mu} = d^{k+\mu}-\delta T_k$. If we put $u = f^{(e)}$, $F=f^{((i-1)e)}$, induction on $i$ with \eqref{eqn1} shows that $S_{ie}=S_e^i$. Also, by Lemma~\ref{COMROOTS}~(i), $S_k = 0$ for all $k \not\equiv 0 \pmod{e}$. When $e < \epsilon = \infty$, $T_k =0$ for all $k$, which proves (iii). 

For part (iv), we set $u=f^{(\mu)}$ and $F=f^{(\epsilon)}$ so that \eqref{eqn1} gives $S_e = \delta T$, and thus $S_{ie} = \delta^i T^i$. We similarly obtain $T_{ie+\epsilon} = \delta^i T^{i+1}$. Finally, if $k \not \equiv \epsilon = e - \mu \pmod{e}$, then $T_k = 0$ by Lemma~\ref{COMROOTS}~(iv), as required.
\end{proof}

We hence obtain the following result.

\begin{lemma} \label{DEGREES2}
Suppose $\mu < \nu$ and $\epsilon < \infty$, and let $1 \leq \ell < k$.
\begin{enumerate}[label=(\roman*)]
\item A zero or pole of $f^{(\ell)}$ is a zero of $f^{(k)}$ if and only if it is a pole of $f^{(k-\mu)}$.
\item A zero or pole of $f^{(\ell)}$ is a pole of $f^{(k)}$ if and only if it is a zero of $f^{(k-\epsilon)}$.
\end{enumerate}
\end{lemma}

\begin{proof}
For the first part, by Lemma~\ref{COMROOTS}~(i) we have that a zero of $f^{(\ell)}$ is a zero of $f^{(k)}$ if and only if $k \equiv \ell \pmod{e}$. Then, by Lemma~\ref{COMROOTS}~(iv), a zero of $f^{(\ell)}$ is a pole of $f^{(k-\mu)}$ if and only if $k-\mu \equiv \ell - \mu \pmod{e}$, which is an equivalent condition. From Lemma~\ref{COMROOTS}~(iii), a pole of $f^{(\ell)}$ is a zero of $f^{(k)}$ if and only if $\deg g_{k-\ell} < \deg h_{k-\ell}$. This occurs precisely when $k - \ell - \mu \equiv 0 \pmod{e}$ by Lemma~\ref{DEGREES}~(iv). On the other hand, a pole of $f^{(\ell)}$ is a pole of $f^{(k-\mu)}$ if and only if $\deg g_{k-\ell-\mu} > \deg h_{k-\ell-\mu}$. By Lemma~\ref{DEGREES}~(iv), this happens exactly when $k-\mu \equiv \ell \pmod{e}$, which is again equivalent.

For part (ii), by Lemma~\ref{COMROOTS}~(iv), a zero of $f^{(\ell)}$ is a pole of $f^{(k)}$ if and only if $k \equiv \ell - \mu \pmod{e}$. Since $e = \mu + \epsilon$, this is equivalent to $k-\epsilon \equiv \ell \pmod{e}$, which is the precise condition for a zero of $f^{(\ell)}$ to be a zero of $f^{(k-\epsilon)}$, by Lemma~\ref{COMROOTS}~(i). Furthermore, from Lemma~\ref{COMROOTS}~(ii), a pole of $f^{(\ell)}$ is a pole of $f^{(k)}$ if and only if $\deg g_{k-\ell} > \deg h_{k-\ell}$. According to Lemma~\ref{DEGREES}~(iv), this is equivalent to $k-\ell$  being of the form $\mu + ie + \epsilon$, which equates to $k-\ell-\epsilon = \mu+ie$. Again by Lemma~\ref{DEGREES}~(iv), this is equivalent to having $\deg g_{k-\ell-\epsilon} < \deg h_{k-\ell-\epsilon}$, which is in turn equivalent to the given pole of $f^{(\ell)}$ being a zero of $f^{(k-\epsilon)}$, by Lemma~\ref{COMROOTS}~(iii).
\end{proof}

As we remarked in the introduction, in order to prove multiplicative independence for the iterates of $f$, it is clearly necessary to show that no iterate of $f$ is a monomial. We first look to a result of Silverman \cite{Silverman2}. Recall that two rational functions $\phi,\psi$ are \emph{linearly conjugate} if there exists a rational function $u$ of degree 1 such that $\phi = u^{-1} \circ \psi \circ u$.

\begin{lemma} \label{NOTMON}
Suppose there exists a positive integer $n$ such that $f^{(n)} \in \mathbb{F}[X]$. Then either $f \in \mathbb{F}[X]$, $f$ is separable and linearly conjugate to $1/X^d$, or $f$ is not separable and $f(X)=L( X^{p^{\ell}})$ for some $L \in \mathbb{F}(X)$ of degree 1.
\end{lemma}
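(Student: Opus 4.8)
The plan is to argue geometrically, viewing $f$ as a self-map of $\mathbb{P}^1$ over $\overline{\mathbb{F}}$ (where we may assume $d\ge 2$, as in Theorem~\ref{MAIN}). The key reformulation is that $f^{(n)}\in\mathbb{F}[X]$ is equivalent to $f^{(n)}$ having its only pole at $\infty$, that is, $(f^{(n)})^{-1}(\infty)=\{\infty\}$ as a set. First I would extract the combinatorial consequence of this. Writing $f^{(n)}=f\circ f^{(n-1)}$ and using that $f^{(n-1)}$ is a non-constant morphism (hence surjective on $\mathbb{P}^1(\overline{\mathbb{F}})$, with fibres over distinct points disjoint and non-empty), $(f^{(n)})^{-1}(\infty)=(f^{(n-1)})^{-1}(f^{-1}(\infty))$ can be a single point only if $f^{-1}(\infty)=\{\alpha_1\}$ is a single point, necessarily of multiplicity $d$, and then $(f^{(n-1)})^{-1}(\alpha_1)=\{\infty\}$. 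Iterating with $f^{(n-1)}=f\circ f^{(n-2)}$, and so on, one gets points $\alpha_0=\infty,\alpha_1,\dots,\alpha_n$ with $f^{-1}(\alpha_i)=\{\alpha_{i+1}\}$ a totally ramified fibre ($e_{\alpha_{i+1}}=d$), and $\alpha_n=\infty$. Hence $\infty$ is a periodic point of $f$ of some exact period $m\mid n$, and every point of its cycle $C=\{\alpha_0,\dots,\alpha_{m-1}\}$ has a unique $f$-preimage of multiplicity $d$; since $f$ is defined over $\mathbb{F}$, an induction on $i$ shows each $\alpha_i\in\mathbb{F}\cup\{\infty\}$.

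The core step is to show that $m\le 2$ unless we are in the inseparable exceptional case. If $f$ is separable, Riemann--Hurwitz over $\overline{\mathbb{F}}$ gives $\sum_P(e_P-1)\le 2d-2$; the $m$ distinct cycle points are totally ramified for $f$, contributing $m(d-1)$, so $m(d-1)\le 2(d-1)$ and thus $m\le 2$. If $f$ is inseparable, then $f'=0$ forces the numerator and denominator of $f$ into $\mathbb{F}[X^{p^\ell}]$, so we may write $f(X)=F(X^{p^\ell})$ with $\ell\ge 1$ maximal and $F\in\mathbb{F}(X)$ separable of degree $d/p^\ell$. If $\deg F=1$ then $f=L(X^{p^\ell})$ with $L=F$ of degree $1$, which is the third alternative. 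Otherwise $\deg F\ge 2$; since the $p^\ell$-power map $\tau(X)=X^{p^\ell}$ is a bijection of $\mathbb{P}^1(\overline{\mathbb{F}})$, the relation $\{\alpha_{i+1}\}=f^{-1}(\alpha_i)=\tau^{-1}(F^{-1}(\alpha_i))$ shows each $F^{-1}(\alpha_i)$ is a single point $\gamma_i=\tau(\alpha_{i+1})$ of multiplicity $\deg F$, and the $\gamma_i$ are $m$ distinct totally ramified points of the separable map $F$, so Riemann--Hurwitz applied to $F$ again yields $m\le 2$.

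It remains to identify $f$ when $m\le 2$. If $m=1$, then $\infty$ is a totally ramified fixed point, $f^{-1}(\infty)=\{\infty\}$, and a rational function whose only pole is $\infty$ is a polynomial, so $f\in\mathbb{F}[X]$. If $m=2$, the cycle is $\{\infty,\beta\}$ with $\beta\in\mathbb{F}$; conjugating by the degree-one map $X\mapsto X+\beta\in\mathbb{F}(X)$ we may take the cycle to be $\{0,\infty\}$, so that $f^{-1}(\infty)=\{0\}$ and $f^{-1}(0)=\{\infty\}$, which forces the numerator of $f$ to be a nonzero constant and the denominator to be $cX^d$, i.e. $f(X)=c/X^d$; a further conjugation by $X\mapsto\lambda X$ with $\lambda^{d+1}=c$ turns this into $1/X^d$, so $f$ is linearly conjugate to $1/X^d$. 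This exhausts the three alternatives.

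I expect the main obstacle to be the inseparable case: Riemann--Hurwitz is simply unavailable for $f$ itself (for instance $1/X^d$ with $p\mid d$ has all fibres totally ramified), and this is exactly where the genuinely new exceptional family $L(X^{p^\ell})$ arises; the device above is to peel off the purely inseparable part as $X\mapsto X^{p^\ell}$ and transfer the ramification count to the separable factor $F$. A minor point needing care is that the three listed alternatives overlap (e.g. $c/X^{p^\ell}$ is both ``linearly conjugate to $1/X^d$'' and of the form $L(X^{p^\ell})$), so it suffices to place $f$ in at least one of them. Alternatively, the statement can be invoked directly from Silverman~\cite{Silverman2}.
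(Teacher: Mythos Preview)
The paper does not give its own proof of this lemma; it simply attributes the result to Silverman~\cite{Silverman2} and states it. Your proposal supplies a complete argument, and it is correct. It is in fact essentially Silverman's proof: translate ``$f^{(n)}\in\mathbb{F}[X]$'' into ``$(f^{(n)})^{-1}(\infty)=\{\infty\}$'', unwind the composition to produce a totally ramified periodic cycle $\{\alpha_0=\infty,\alpha_1,\dots,\alpha_{m-1}\}$ for $f$, bound $m\le 2$ by Riemann--Hurwitz in the separable case, and identify $m=1,2$ with the polynomial and $c/X^d$ cases respectively. Your handling of the inseparable case---factoring $f=F\circ\tau$ with $\tau(X)=X^{p^\ell}$ and $F$ separable, then either reading off $f=L(X^{p^\ell})$ when $\deg F=1$ or transferring the totally ramified cycle to $F$ and applying Riemann--Hurwitz to $F$ when $\deg F\ge 2$---is exactly the right device and fills in precisely the gap where Riemann--Hurwitz fails for $f$ itself.

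Two small remarks. First, the reduction to $d\ge2$ is harmless here since the lemma is only invoked under the hypotheses of Theorem~\ref{MAIN}, but for $d=1$ the statement is easy to check directly. Second, in the $m=2$ step your final conjugation $X\mapsto\lambda X$ with $\lambda^{d+1}=c$ need only exist over $\overline{\mathbb{F}}$, so the conclusion is linear conjugacy over the algebraic closure; this matches Silverman's formulation and suffices for the paper's applications (one already has $\mathbb{F}$-conjugacy to $c/X^d$ for some $c\in\mathbb{F}^\times$, which is all that is really used downstream).
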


Indeed, if no iterate of $f$ is a polynomial, then certainly none can be a monomial. In fact, in the case where $f$ is separable, we show that a rational function has a monomial iterate if and only if it is itself a monomial. This is not true however, when $f$ is not separable. For example, if $\mathbb{F}$ has characteristic 2, then $f(X) = 1+1/X^2$ satisfies $f^{(2)}(X) = \frac{1}{X^4+1}$ and $f^{(3)}(X) = X^8$.

Note that in the case of characteristic 0, some cases of the following can actually be viewed as a corollary of the stronger result \cite[Theorem~1]{Zannier}, which concerns the number of terms (monomials) of composite polynomials. The results of \cite{Zannier} are further extended to rational functions in \cite{Fuchs}.

\begin{lemma} \label{NOTMON2}
If $f \in \mathbb{F}(X)$ is neither a monomial, nor of the form $L(X^{p^\ell})$ for some $\ell \geq 0$ and $L \in \mathbb{F}(X)$ of degree 1, then $f^{(n)}$ is not a monomial for any $n \geq 1$.
\end{lemma}

\begin{proof}
We begin with the case where $f \in \mathbb{F}[X]$ is a polynomial. First suppose $\mathbb{F}$ has zero characteristic. We proceed by induction on $k$. That is, suppose $\deg f \geq 2$, and that $f$ is not a monomial. Then the case where $k=1$ is trivial. If $f^{(k-1)}$ is not a monomial, we can write
\begin{align*}
f(X) & = a_1X^{d_1}+ \ldots +a_sX^{d_s}; \\
s & >1, \: d=d_1> \ldots >d_s \geq 0, \: a_1, \ldots ,a_s \in \mathbb{F} \setminus \left\lbrace 0 \right\rbrace,
\end{align*}
and
\begin{align*}
f^{(k-1)}(X) & = b_1X^{e_1}+ \ldots +b_tX^{e_t}; \\
t & > 1, \: d^{k-1}=e_1> \ldots >e_t \geq 0, \: b_1, \ldots ,b_t \in \mathbb{F} \setminus \left\lbrace 0 \right\rbrace.
\end{align*}
Hence we have the following cases:

If $d_s=0$, $e_t \neq 0$, we have that
\begin{align*}
f^{(k)}(X) & = f(f^{(k-1)}(X)) \\
& = a_1(b_1X^{e_1}+ \ldots +b_tX^{e_t})^{d_1}+ \ldots +a_s
\end{align*}
has constant term $a_s \neq 0$. Similarly, if $d_s \neq 0$, $e_t=0$,
\begin{align*}
f^{(k)}(X) & = f^{(k-1)}(f(X)) \\
& = b_1(a_1X^{d_1}+ \ldots +a_sX^{d_s})^{e_1}+ \ldots +b_t
\end{align*}
has constant term $b_t \neq 0$. If $d_s \neq 0$, $e_t \neq 0$, then
\begin{align*}
f^{(k)}(X) & = f(f^{(k-1)}(X)) \\
& = a_1(b_1X^{e_1}+ \ldots +b_tX^{e_t})^{d_1}+ \ldots +a_s(b_1X^{e_1}+ \ldots +b_tX^{e_t})^{d_s}
\end{align*}
has lowest order term $a_sb_t^{d_s}X^{d_se_t} \neq 0$, since $a_s \neq 0$, $b_t \neq 0$. Finally, when $d_s=e_t=0$, if $e_2 > 0$, we have
\begin{align*}
f^{(k)}(X) & = f(f^{(k-1)}(X)) \\
& = a_1(b_1X^{e_1}+b_2X^{e_2}+ \ldots +b_t)^{d_1}+ \ldots +a_s.
\end{align*}
In this case, the term in $X^{(d_1-1)e_1+e_2}$ has coefficient $d_1a_1b_1^{d_1-1}b_2 \neq 0$, since we have $a_1,b_1,b_2 \neq 0$, and $\mathbb{F}$ has $0$ characteristic. Otherwise, $e_2 = 0$ and
\begin{align*}
f^{(k)}(X) & = f^{(k-1)}(f(X)) \\
& = b_1(a_1X^{d_1}+a_2X^{d_2}+ \ldots +a_s)^{e_1}+b_2.
\end{align*}
Similarly, the term in $X^{(e_1-1)d_1+d_2}$ has coefficient $e_1b_1a_1^{e_1-1}a_2 \neq 0$. That is, in all cases $f^{(k)}$ is not a monomial, and we are done.

Now, suppose $\mathbb{F}$ has positive characteristic $p$, and that $f^{(k)}$ is monomial, say of the form $cX^{d^k}$ with $c \in \mathbb{F} \setminus \left\lbrace 0 \right\rbrace$, for some $k > 1$. We can write 
\[ f(X) = a_1X^{d_1p^{\ell}}+ \ldots +a_tX^{d_tp^{\ell}}+b, \] 
where $a_1, \ldots ,a_t \in \mathbb{F} \setminus \left\lbrace 0 \right\rbrace$, $b \in \mathbb{F}$, $t \geq 1$, $\ell \geq 0$, $d_1 > \ldots > d_t \geq 1$, and $p \nmid \gcd(d_1, \ldots ,d_t)$.

Here, the degree of $f$ is $d = d_1p^{\ell}$. Denote $r = p^{\ell}$ and let
\begin{align*}
v(X) & = a_1X^{d_1}+ \ldots +a_tX^{d_t}+b, \\
w_i(X) & = a_1^{r^{-i}}X^{d_1}+ \ldots +a_t^{r^{-i}}X^{d_t}+b^{r^{-i}}, \: i \geq 1.
\end{align*}
Since $r^i$ is a power of $p$, we have for any $i \geq 1$ \[ (w_i(X))^{r^i} = a_1X^{d_1r^i}+ \ldots +a_tX^{d_tr^i}+b = v(X^{r^i}). \]
Hence
\begin{align*}
f(X) & = v(X^r),\\
f^{(2)}(X) & = v(v(X^r)^r))=v\left((w_1(X))^{r^2}\right) = (w_2 \circ w_1(X))^{r^2}. \\
& \vdots \\
f^{(k)}(X) & = (w_k \circ w_{k-1} \circ \ldots \circ w_1(X))^{r^k},\: \: k \geq 1.
\end{align*}
Hence we have
\[ w_k \circ w_{k-1} \circ \ldots \circ w_1(X) = c_0X^{d_1^k}, \] 
where $c_0 = c^{r^{-k}} \neq 0$, since $c \neq 0$. Differentiating then gives 
\begin{multline} \label{eqn6}
w_k'(w_{k-1} \circ \ldots \circ w_1(X)) \cdot w_{k-1}'(w_{k-2} \circ \ldots \circ w_1(X)) \cdots w_2'(w_1(X)) \cdot w_1'(X) \\ = d_1^kc_0X^{d_1^k-1}.
\end{multline}
Since $p \nmid \gcd(d_1, \ldots ,d_t)$, $w_i' \neq 0$ for all $i \geq 1$. Thus, the polynomial on the left hand side of \eqref{eqn6} is not zero. So $p \nmid d_1$, as otherwise the right hand side would be zero. Since $d_1^kc_0 \neq 0$, the equation \eqref{eqn6} implies that $w_1'(X)$ divides $X^{d_1^k-1}$. Therefore $w_1'$ is a monomial. Since $p \nmid d_1$, we must have $p \mid d_i$ for $2 \leq i \leq t$. Hence 
\[ w_i'(X) = d_1a_1^{-r^i}X^{d_1-1}, \: \: i \geq 1. \] 
From $(6)$, $w_2'(w_1(X)) = d_1a_1^{-r^2}(w_1(X))^{d_1-1}$ is also a factor of $x^{d_1^k-1}$. If $d_1 > 1$, then $w_1$ is a monomial and hence $f$ must also be a monomial. If $d_1= 1$, then $d_1> \ldots >d_t \geq 1$ implies that $t=1$. Therefore $f$ is a binomial of the form $aX^{p^{\ell}}+b$.

Now, suppose $f \notin \mathbb{F}[X]$, and that $f^{(n)}$ is a monomial for some $n \geq 1$. Then in particular, some iterate of $f$ is a polynomial.

If $f$ is separable, then by Lemma \ref{NOTMON}, $f$ is linearly conjugate to $1/X^d$. That is, $f$ has the form
$$
f(X) = a+ \frac{b}{(X-a)^d}, \qquad a,b \in \mathbb{F}.
$$
Then $f^{(2)}(X) = a+b^{1-d}(X-a)^{d^2}$, which is a monomial if and only if $a = 0$, in which case $f$ is a monomial. Suppose $a \neq 0$. Since $f$ is separable, $d \neq p^{\ell}$ for any $\ell > 0$, and so by the above argument, $f^{(n)}$ is not a monomial for any even $n \geq 2$ unless $f$ is a monomial. Moreover, we have in this case $\nu = 2 < \mu$, so by Lemma \ref{DEGREES} (i), $\deg g_n = \deg h_n$, and so $f^{(n)}$ is not a monomial, for all odd $n$.

Finally, if $f$ is not separable, then by Lemma \ref{NOTMON}, $f^{(n)}$ is not a polynomial, and hence is not a monomial, for any $n \geq 1$ unless $f$ is of the form $L(X^{p^\ell})$ for some $L \in \mathbb{F}(X)$ of degree 1.
\end{proof}

We can now prove Theorem~\ref{MAIN}. Recall that we write $f^{(k)}=g_k/h_k$, and define $\delta, S_k$, and $T_k$ as in Lemma~\ref{DEGREES}, again setting $S=S_e$ and $T=T_\epsilon$ where applicable. Now, where $\Psi(n)$ is defined as in \eqref{PSI}, noting that $\mathbb{F}(X)$ is a unique factorisation domain, any zeros or poles of $f^{(n)}$ which can not be found in previous iterates will contribute to the value of $\Psi(n)$ counting multiplicity, since $k_n \neq 0$. 

We first consider the case where $\nu \leq \mu$. Then $\deg g_k \geq \deg h_k$ for all $k$ by Lemma 2.2 (i). Hence $\gcd(g_n,h_k)=1$ for any $k < n$ by Lemma \ref{COMROOTS} (iii). Moreover, if $n \leq e$, then $\gcd(g_n,g_k)=1$ for any $k < n$, by Lemma \ref{COMROOTS} (i). In this case, we have $\Psi(n) \geq \deg g_n = d^n$. Suppose $e < \infty$ and $n > e$. Then for $k < n$, a zero of $f^{(k)}$ is a zero of $f^{(n)}$ if and only if $k \equiv n \pmod{e}$ by Lemma \ref{COMROOTS}. In this case we also have $k \equiv n-e \pmod{e}$, and so such a zero must also be a zero of $f^{(n-e)}$. Write $u=f^{(e)}$ and $F=f^{(n-e)}$, so $\eqref{eqn1}$ gives $g_n = g_{(n-e)}^{S} q$, where $S > 0$ and $\gcd(q,g_{(n-e)}) = 1$. Since $f^{(e)}$ is not a monomial by Lemma \ref{NOTMON2}, we have $S < d^e$, and so $\Psi(n) \geq \deg q = d^n - Sd^{n-e} \geq d^{n-e}$.

Now, suppose $\mu < \nu$. If $n \leq \mu$, then $\gcd(h_n,g_k)=\gcd(h_n,h_k)=1$ for all $k < n$ by Lemma \ref{COMROOTS} (ii) and (iv). Hence $\Psi(n) \geq \deg h_n = d^n$. So suppose $n > \mu$. If $e < \epsilon$, then by Lemma~\ref{DEGREES} (ii) and (iii), $\deg h_k = d^k \geq \deg g_k$ for all $k \geq 1$. Moreover, if $n \leq \epsilon$, then $\deg h_k = d^k \geq \deg g_k$ for all  $1 \leq k \leq n$ by \eqref{DEG}. So, by Lemma~\ref{COMROOTS} (ii) and (iv), $\gcd(g_k,h_n)=\gcd(h_k,h_n)=1$ for all $1 \leq k < n$, giving $\Psi(n) \geq \deg h_n = d^n$. We hence assume that $\epsilon < n < \infty$.

We now split into a further two cases. Firstly, suppose that $\deg g_{\mu} > 0$, so that $\delta < d^{\mu}$. Since $e = \mu + \epsilon > \mu$, we do not have $\mu = ie$, and so $S_{\mu}=0$, by Lemma~\ref{DEGREES}~(iv). Hence, where $u=f^{(\mu)}$ and $F=f^{(n-\mu)}$, \eqref{eqn1} gives $g_n = h_{n-\mu}^{\delta} q$. If $n = \mu + ie$, then $n - \mu = \mu + (i-1)e + \epsilon$, and so by Lemma~\ref{DEGREES}~(iv),
\begin{align*}
\delta \deg h_{n-\mu} + (\deg g_{\mu}) d^{n-\mu} & = \delta ( d^{n-\mu}-\delta^iT^i ) + (d^{\mu}-\delta)d^{n-\mu} \\
& = d^n - \delta^{i+1}T^i = \deg g_n.
\end{align*}
Otherwise, again by Lemma~\ref{DEGREES}~(iv), $\deg g_n = d^n$, and so
\[ \delta \deg h_{n-\mu} + (\deg g_{\mu}) d^{n-\mu} \leq \delta d^{n-\mu} + (d^{\mu}-\delta)d^{n-\mu} = d^n = \deg g_n. \]
Hence, $\deg q \geq (\deg g_{\mu})d^{n-\mu} \geq d^{n-\mu}$. Moreover, we have $\gcd(h_k,q)=\gcd(g_k,q)=1$ for all $1 \leq k < n$ by Lemma~\ref{DEGREES2}~(ii), and therefore $\Psi(n) \geq \deg q \geq d^{n-\mu}$.

On the other hand, where $\deg g_{\mu}=0$, we set $u = f^{(\epsilon)}$, and $F=f^{(n-\epsilon)}$. If $\epsilon \leq \mu$, then by definition $\deg g_{\epsilon} \leq \deg h_{\epsilon}$. Otherwise, $\epsilon=\mu+k$, with $k \neq ie, ie+\epsilon$, and so by Lemma~\ref{DEGREES}~(iv), we have $\deg g_{\epsilon} = \deg h_{\epsilon}$. Hence, by \eqref{eqn1}, $f^{(n)}=h_{n-\epsilon}^{m-l} g_{n-\epsilon}^{-T} q/r$, where $m \geq l$. We thus obtain $\deg r = \deg h_n - T \deg g_{n-\epsilon}$. Note that $T < d^{\epsilon}$, as if this were not the case, by Lemma~\ref{DEGREES}~(iv) we would have 
\[ \deg h_{\mu+\epsilon} = d^{\mu+\epsilon} - \delta T = d^{\mu+\epsilon}-d^{\mu} d^{\epsilon} = 0, \]
and $S_{\mu+\epsilon}=S_e=\delta T = d^{\mu} d^{\epsilon}$, which implies that $f^{(\mu+\epsilon)}$ is a monomial, contradicting Lemma \ref{NOTMON2}. In particular, this means that $d^n - Td^{n-\epsilon} \geq d^{n-\epsilon}$. Hence, if $n = \mu + ie + \epsilon$, then $n-\epsilon = \mu + ie$, so by Lemma~\ref{DEGREES}~(iv), we have
\[ \deg r = d^n - \delta^{i+1}T^{i+1} - T(d^{n-\epsilon} - \delta^{i+1}T^i) = d^n-Td^{n-\epsilon} \geq d^{n-\epsilon}. \]
Otherwise, once again using Lemma~\ref{DEGREES}~(iv), $\deg h_n = d^n$, and so
\[ \deg r = d^n - T \deg g_{n-\epsilon} \geq d^n - T d^{n-\epsilon} \geq d^{n-\epsilon}. \]
To conclude, by Lemma~\ref{DEGREES2}~(iii), we have that $\gcd(h_k,r)=\gcd(g_k,r)=1$ for all $1 \leq k < n$, and thus $\Psi(n) \geq \deg r \geq d^{n-\epsilon}$. This completes the proof. \QEDB

\section{Proof of Theorem~\ref{ZEROS}} \label{sec:zeros}

Recall the polynomial $ABC$-theorem (proved first by Stothers \cite{Stothers}, then independently by Mason \cite{Mason} and Silverman \cite{Silverman}).

\begin{lemma} \label{ABC}
Let $\mathbb{F}$ be a field and let $A,B,C \in \mathbb{F}[X]$ be relatively prime polynomials such that $A+B+C=0$ and not all of $A, B$ and $C$ have vanishing derivative. Then 
\[ \max \left\lbrace \deg A, \deg B, \deg C \right\rbrace \leq \deg \rad(ABC) -1, \] 
where, for $f \in \mathbb{F}[X]$, $\rad(f)$ is the product of the distinct monic irreducible factors of $f$.
\end{lemma}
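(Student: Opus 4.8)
The plan is to run the classical Wronskian argument (Stothers--Mason--Silverman), paying attention only to the two places where the base field's characteristic could intervene. First I would dispose of degeneracies: we may assume $A,B,C$ are all nonzero, since if say $C=0$ then $A=-B$ and relative primality forces $A,B$ to be units, making the inequality trivial. Moreover, $A,B,C$ must be pairwise coprime: any irreducible $P$ dividing two of them divides the third via $A+B+C=0$, contradicting $\gcd(A,B,C)=1$. In particular $\rad(ABC)=\rad(A)\rad(B)\rad(C)$, so it suffices to bound $\max\{\deg A,\deg B,\deg C\}$ by $\deg\rad A+\deg\rad B+\deg\rad C-1$.

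The central object is the Wronskian $W:=AB'-A'B$. Differentiating $A+B+C=0$ gives $A'+B'+C'=0$, and a one-line substitution (replacing $C,C'$) shows $W=AB'-A'B=BC'-B'C=CA'-C'A$. I would first verify $W\neq 0$: if $W=0$ then $A\mid A'B$, so $A\mid A'$ since $\gcd(A,B)=1$, which forces $A'=0$ because $\deg A'<\deg A$ whenever $A'\neq 0$; symmetrically $B'=0$, hence $C'=0$, contradicting the hypothesis. This is the unique spot where the ``not all derivatives vanish'' assumption is used. On the upper side, each of $AB'$ and $A'B$ has degree at most $\deg A+\deg B-1$, so $\deg W\le\deg A+\deg B-1$, and using the other two forms of $W$ also $\deg W\le\deg B+\deg C-1$ and $\deg W\le\deg A+\deg C-1$.

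For the lower bound on $\deg W$, set $N_A:=\gcd(A,A')$ and likewise $N_B,N_C$. Since $N_A$ divides both $A$ and $A'$, it divides both terms of $W=AB'-A'B$, so $N_A\mid W$; the same reasoning gives $N_B\mid W$ and $N_C\mid W$. As $A,B,C$ are pairwise coprime, so are $N_A,N_B,N_C$, whence $N_AN_BN_C\mid W$ and $\deg W\ge\deg N_A+\deg N_B+\deg N_C$. Next, for an irreducible $P$ with exact power $P^e$ dividing $A$, writing $A=P^eQ$ yields $A'=P^{e-1}(eP'Q+PQ')$, so $P^{e-1}\mid A'$; multiplying over all such $P$ shows $A/\rad(A)$ divides $N_A$ in every characteristic, hence $\deg N_A\ge\deg A-\deg\rad A$, and similarly for $B$ and $C$.

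Combining the two estimates, $\deg A+\deg B-1\ge\deg W\ge(\deg A-\deg\rad A)+(\deg B-\deg\rad B)+(\deg C-\deg\rad C)$, which rearranges to $\deg C\le\deg\rad A+\deg\rad B+\deg\rad C-1=\deg\rad(ABC)-1$. Repeating with the other two upper bounds on $\deg W$ gives the corresponding inequalities for $\deg A$ and $\deg B$, and taking the maximum finishes the proof. The argument is essentially routine; the only genuine points of care are checking that $W\neq 0$ uses precisely the stated derivative hypothesis (not more), and confirming that the divisibility $A/\rad(A)\mid\gcd(A,A')$ still yields $\deg N_A\ge\deg A-\deg\rad A$ even when $p\mid e$ for some exponent, where $\gcd(A,A')$ may be strictly larger than $A/\rad(A)$.
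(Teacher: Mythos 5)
The paper does not prove this lemma at all: it is quoted as a known result (the Mason--Stothers polynomial $ABC$-theorem) with citations to Stothers, Mason and Silverman, so there is no in-paper argument to compare against. Your proof is the standard Wronskian argument for that theorem and it is correct: the identities $W=AB'-A'B=BC'-B'C=CA'-C'A$, the nonvanishing of $W$ (which is exactly where the hypothesis that not all derivatives vanish enters), the upper bound $\deg W\le \deg A+\deg B-1$, the divisibility $N_AN_BN_C\mid W$ via pairwise coprimality, and the characteristic-free estimate $P^{e-1}\mid A'$ giving $\deg\gcd(A,A')\ge\deg A-\deg\rad A$ all check out. One cosmetic point: in the degenerate case $C=0$ the inequality is not ``trivial'' (with $ABC=0$ the radical is not even well defined); the correct observation is that this case is excluded outright, since relative primality forces $A,B$ to be constants and then all three derivatives vanish, contradicting the hypothesis. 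This does not affect the substance of the argument.
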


We use this to obtain a version of part of the main result of \cite{FucPet}. Namely, we give a lower bound for the number of distinct zeros of a composite polynomial.

\begin{lemma} \label{COMPOSZERO}
Let $f = g \circ h \in \mathbb{F}[X]$, where $h$ has non-vanishing derivative, and $\z(g) > 1$. Then
\[ \z(f) \geq \gamma(g) \deg h + 1, \]
where $\gamma$ is defined as in Theorem~\ref{ZEROS}.
\end{lemma}

\begin{proof}
In the characteristic 0 case, this is readily obtained from the proof of \cite[Main~Theorem]{FucPet}. In particular, we are in the case where $v_{\infty}(g) \neq 0$, where $v_{\infty}$ is the non-archimedean valuation defined on $\mathbb{F}(X)$ by $v_{\infty}(p/q)= \deg p - \deg q$. When the characteristic is positive, we proceed in much the same vein. Write
$$
f(X) = \prod_{i=1}^n (X-\alpha_i)^{f_i}, \quad g(X) = \prod_{j=1}^t (X-\beta_j)^{k_j}.
$$
Then
$$
f(X) = g(h(X)) = \prod_{j=1}^t (h(X)-\beta_j)^{k_j}.
$$
For $\beta_i \neq \beta_j$, the factors $h(X)-\beta_i$ and $h(X)-\beta_j$ have no zeros in common, so $t \leq n$, and there exists a partition of $\{1,\ldots,n\}$ into disjoint subsets $S_{\beta_1},\ldots, S_{\beta_t}$, such that
$$
h(X) - \beta_j = p_j(X) := \prod_{m \in S_{\beta_j}} (X-\alpha_m)^{l_m},
$$
with $l_m k_m = f_m$, for every $j=1,\ldots,t$. Since $t = \z(g) > 1$, we can take $1 \leq i < j \leq t$, and obtain $h(X) = \beta_i + p_i(X) = \beta_j + p_j(X)$. That is,
$$
(\beta_i - \beta_j) + p_i + (-p_j) = 0,
$$
where the polynomials on the left-hand side are relatively prime, and in particular, since $h$ has non-vanishing derivative, so does $p_i$. Thus, applying Lemma \ref{ABC}, we have
\begin{align*}
\max \{ \deg (\beta_i-\beta_j), \deg p_i, \deg (-p_j) \} & = \deg h \\
& \leq \deg \rad( (\beta_j-\beta_i)p_ip_j ) - 1 \leq n-1.
\end{align*}
Therefore $n = \z(f) \geq \deg h + 1$. 
\end{proof}

We now prove Theorem \ref{ZEROS}. Suppose $f \in \mathbb{F}[X]$ has non-vanishing derivative. Then for any positive integer $n$,
\[ \frac{d}{dX} f^{(n)}(X) = f'(f^{(n-1)}(X)) \cdot f'(f^{(n-2)}(X)) \cdots f'(f(X)) \cdot f'(X) \neq 0. \]
We can hence apply Lemma~\ref{COMPOSZERO} to obtain $\z(f^{(n)}) \geq \gamma(f) d^{n-1} + 1$. As in the proof of Theorem \ref{MAIN}, any zeros of $f^{(n)}$ which cannot be found in previous iterates will contribute to the value of $Z(n)$, but this time without multiplicity. If $n \leq e$, then $\gcd( f^{(k)},f^{(n)})=1$ for all $1 \leq k <n$ by Lemma \ref{COMROOTS} (i), and so $Z(n) \geq \z(f^{(n)}) \geq \gamma(f) d^{n-1} + 1$. Suppose that $e < n < \infty$, and write
\[
f^{(e)}(X) = X^S \phi(X), \quad S \geq 1, \: \phi(0) \neq 0.
\]
We again note that any zeros of $f^{(n)}$ which are common with a previous iterate belong to $f^{(n-e)}$ by Lemma \ref{COMROOTS} (i). Now,
\[
f^{(n)}(X)=f^{(e)} \left(f^{(n-e)}(X) \right) = \left(f^{(n-e)}(X) \right)^S \phi \left( f^{(n-e)}(X) \right).
\]
If $e>1$, then $\z(f^{(e)}) \geq d^{e-1}+1 > 2$, and otherwise $\z(f^{(e)}) > 2$ by assumption. Hence $\z(\phi) > 1$, and so by Lemma \ref{COMPOSZERO}, $Z(n) \geq \z \left( \phi \left( f^{(n-e)} \right) \right) \geq \gamma(\phi)d^{n-e}+1 \geq d^{n-e}+1$. \QEDB

\section{Proof of Theorem~\ref{ORD}} \label{sec:ord}

If $f \in \mathbb{F}[X]$, this is the main result of \cite{Popovych}, so assume otherwise, in which case we define $s = \lfloor (n-1)/2 \rfloor$. Recall the following lower bound from Lambe \cite{Lambe}, on the number of solutions to a linear Diophantine inequality:
 
\begin{lemma} \label{DIO}
Suppose that $m$ and $x_0, \ldots ,x_{r-1}$ are positive integers such that $\gcd(x_0, \ldots ,x_{r-1})=1$. Then the number of non-negative integer solutions $a_0, \ldots ,a_{r-1}$ to the inequality 
\[ \sum_{i=0}^{r-1} a_ix_i \leq m, \] 
is at least 
\[ \begin{pmatrix} m+r \\ r \end{pmatrix} \prod_{i=0}^{r-1} \frac{1}{x_i}, \] 
with equality when $x_0=...=x_{r-1}=1$.
\end{lemma}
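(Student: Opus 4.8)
The plan is to prove Lemma~\ref{DIO} by a short fibre-counting argument that compares the solution set in question with the ``all ones'' case. Set
\[ A = \left\{ (b_0, \ldots, b_{r-1}) \in \mathbb{Z}_{\geq 0}^r : \sum_{i=0}^{r-1} b_i \leq m \right\}, \quad B = \left\{ (a_0, \ldots, a_{r-1}) \in \mathbb{Z}_{\geq 0}^r : \sum_{i=0}^{r-1} a_i x_i \leq m \right\}; \]
then $B$ is exactly the set whose size we must bound below, while $|A| = \binom{m+r}{r}$ by a standard stars-and-bars count (adjoin a slack variable $b_r \geq 0$ and count the solutions of $b_0 + \cdots + b_r = m$).

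The key step is to introduce the map $\Phi \colon A \to B$ defined coordinatewise by $\Phi(b_0, \ldots, b_{r-1}) = \bigl( \lfloor b_0/x_0 \rfloor, \ldots, \lfloor b_{r-1}/x_{r-1} \rfloor \bigr)$. This is well defined, since $x_i \lfloor b_i/x_i \rfloor \leq b_i$ for each $i$ gives $\sum_i x_i \lfloor b_i/x_i \rfloor \leq \sum_i b_i \leq m$, so the image indeed lies in $B$. Moreover its fibres are small: if $\Phi(b_0, \ldots, b_{r-1}) = (a_0, \ldots, a_{r-1})$, then each $b_i$ must lie in $\{ a_i x_i, a_i x_i + 1, \ldots, a_i x_i + x_i - 1 \}$, a set of $x_i$ elements, so $|\Phi^{-1}(\mathbf{a})| \leq \prod_{i=0}^{r-1} x_i$ for every $\mathbf{a} \in B$. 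Writing $A$ as the disjoint union of these fibres then yields
\[ \binom{m+r}{r} = |A| = \sum_{\mathbf{a} \in B} |\Phi^{-1}(\mathbf{a})| \leq |B| \prod_{i=0}^{r-1} x_i, \]
which rearranges at once to the asserted inequality $|B| \geq \binom{m+r}{r} \prod_{i=0}^{r-1} \frac{1}{x_i}$. When $x_0 = \cdots = x_{r-1} = 1$ the map $\Phi$ is the identity on $A = B$, giving the claimed equality $|B| = \binom{m+r}{r}$.

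I do not expect a genuine obstacle here: the entire content is the choice of $\Phi$, and once one takes the coordinatewise floor the verifications are immediate. It is perhaps worth remarking that this argument makes no use of the hypothesis $\gcd(x_0, \ldots, x_{r-1}) = 1$, which is pertinent to the finer asymptotic forms of Lambe's estimate rather than to the bare inequality needed here; alternatively, the same bound follows by induction on $r$, peeling off $a_{r-1}$ and invoking the one-variable estimate $\lfloor t/x \rfloor + 1 \geq (t+1)/x$, but the fibre-counting proof is cleaner and produces the equality case with no extra work.
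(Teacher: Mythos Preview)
Your argument is correct. The map $\Phi$ is well defined, the fibre bound $|\Phi^{-1}(\mathbf a)|\le\prod_i x_i$ is immediate from the description of $\lfloor\cdot/x_i\rfloor$-preimages, and the final inequality and equality case follow exactly as you say. Your observation that the hypothesis $\gcd(x_0,\ldots,x_{r-1})=1$ plays no role in this particular bound is also accurate.

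For comparison: the paper does not actually prove this lemma. It simply quotes the bound as a known result of Lambe~\cite{Lambe} and uses it as a black box in the proofs of Theorem~\ref{ORD} and Theorem~\ref{SHIFT}. So your contribution here is to supply a short, self-contained proof where the paper offers only a citation; the fibre-counting argument you give is the cleanest way to do this and has the advantage of making the equality case transparent.
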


Now, set $m = \bar{n}$. Since $\alpha$ is a root of $X^{m}h(X)-g(X)$, we have $\alpha^{m} = f(\alpha)$. As $m$ is a power of $q$, applying the Frobenius automorphism iteratively gives
\begin{equation} \label{eqn8}
\alpha^{m^i}=f^{(i)}(\alpha), \: i \geq 0.
\end{equation}
Consider the set 
\[ S = \left\lbrace \sum_{i=0}^{t-1}a_im^i : \sum_{i=0}^{t-1} a_id^i \leq s \right\rbrace. \] 
We will show that the powers $\alpha^a$, with $a \in S$, are distinct in $\mathbb{F}_{q^n}$, so from Lemma~\ref{DIO}, $\alpha$ has order at least 
\[ \# S \geq \begin{pmatrix} s+t \\ t  \end{pmatrix} \prod_{i=0}^{t-1}\frac{1}{d^i}. \] 
Suppose that there exist integers $a, b$ in $S$ such that $\alpha^a=\alpha^b$. Writing $a = \sum_{i=0}^{t-1} a_im^i$ and $b = \sum_{i=0}^{t-1} b_im^i$, we have 
\[ \prod_{i=0}^{t-1} \left( \alpha^{m^i} \right)^{a_i} = \prod_{i=0}^{t-1} \left( \alpha^{m^i} \right)^{b_i}. \] 
The equation \eqref{eqn8} then gives 
\[ \prod_{i=0}^{t-1} \left( f^{(i)}(\alpha) \right)^{a_i} = \prod_{i=0}^{t-1} \left( f^{(i)}(\alpha) \right)^{b_i}. \] 
Let 
\[ k_1(X) = \prod_{a_i>b_i} g_i(X)^{a_i-b_i} \prod_{a_i<b_i} h_i(X)^{b_i-a_i} \] 
and 
\[ k_2(X) = \prod_{a_i<b_i} g_i(X)^{b_i-a_i} \prod_{a_i>b_i} h_i(X)^{a_i-b_i}. \] 
Then $k_1(\alpha) = k_2(\alpha)$. Since $\alpha$ has degree $n$ and $k_1$ and $k_2$ have degree at most 
\[ \sum_{i=0}^{t-1} \max \left\lbrace a_i,b_i \right\rbrace d^i \leq 2s \leq n-1, \] 
we have $k_1(X) = k_2(X)$. Thus $\prod_{i=0}^{t-1} \left( f^{(i)}(X) \right)^{a_i-b_i} = 1$. Then $a_i-b_i=0$ for each $i$ by Corollary~\ref{IND}, and hence $a = b$. \QEDB \medskip

In light of Theorem~\ref{ORD}, we wish to determine whether such a pair $(g,h)$ of suitable polynomials always exists for all $n$. If this is so, we can construct a reliable algorithm for finding elements of high order in $\mathbb{F}_{q^n}$. Namely, checking $X^{\bar n}h(X)-g(X)$ for irreducible factors of degree $n$, for each appropriate pair $(g,h) \in \mathbb{F}_q[X]^2$. The case where $h(X)=1$ is considered in \cite{Gao}, where it is reasonably conjectured, but not proved, that for every $n$, there exists $g \in \mathbb{F}_q[X]$ with $\deg g \leq 2 \log_q n$, such that $X^{\bar n}-g(X)$ has an irreducible factor of degree $n$. \par 
For our more general situation, we make the following weaker conjecture,

\begin{conjecture} \label{CONJ}
Suppose $n \geq 1$, and let $T$ be the set of pairs $(g,h) \in \mathbb{F}_q[X]^2$ of degree not exceeding $d:=\left\lceil 2\log_qn \right\rceil$ such that $f=g/h$ satisfies the conditions from Corollary~\ref{IND}. Then there exists $(g,h) \in T$ such that $X^{\bar{n}}h(X)-g(X)$ has an irreducible factor of degree $n$.
\end{conjecture}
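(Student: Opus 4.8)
We do not settle Conjecture~\ref{CONJ} here, but the plan for a plausible line of attack, together with the obstruction to completing it, is as follows. The approach I would take is probabilistic: bound below the number of pairs $(g,h) \in T$ for which $P_{g,h}(X) := X^{\bar n}h(X) - g(X)$ has an irreducible factor of degree exactly $n$, and show this count is positive. The first step is to record the size of the search space. As $g$ and $h$ run over polynomials of degree at most $d = \lceil 2\log_q n\rceil$ there are $q^{2(d+1)}$ pairs; since $q^d$ lies between $n^2$ and $qn^2$, the set $T$ has size of order $n^4$ for fixed $q$, the coprimality constraint and the exclusions of Corollary~\ref{IND} removing only a bounded proportion (the excluded $f$ forming a proper subvariety of coefficient space). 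Each pair yields $P_{g,h}$, of degree $\bar n + \deg h \le \bar n + d$; since $n \le \bar n < qn$, it has at most $q-1$ irreducible factors of degree $n$, so a degree-$n$ factor, when present, is almost unique and comes with a cofactor of controlled degree.

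The second step is the heuristic that the conjecture should be true. A monic polynomial of degree $m \approx \bar n$ over $\mathbb{F}_q$, chosen uniformly, has an irreducible factor of degree $n$ with probability of order $1/n$: the number of such polynomials is comparable to (the number of monic irreducibles of degree $n$) times (the number of monic cofactors of degree $m-n$), namely $(q^n/n)\,q^{m-n} = q^m/n$, out of $q^m$ in all. If the family $\{P_{g,h} : (g,h) \in T\}$ were equidistributed with respect to this property, the number of admissible pairs would be of order $|T|/n \approx n^3$, far more than the single pair the conjecture demands.

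The third step is to upgrade this heuristic to a genuine equidistribution statement for the factorisation type of $P_{g,h}$ as $(g,h)$ varies. The natural route is via a function-field Chebotarev theorem: regard $P(X; \mathbf a, \mathbf b) = \sum_{j=0}^d b_j X^{\bar n + j} - \sum_{i=0}^d a_i X^i$ as a polynomial over the rational function field in the $2(d+1)$ parameters $\mathbf a, \mathbf b$, establish a lower bound (ideally, fullness) for the Galois group $G$ of its splitting field, and apply an effective Chebotarev-type estimate to conclude that a positive proportion of specialisations $(\mathbf a, \mathbf b) \mapsto (g,h)$ realise a conjugacy class of $G$ containing an $n$-cycle. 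An essentially equivalent elementary route is to sieve: for a fixed monic irreducible $\pi$ of degree $n$, estimate $\#\{(g,h) \in T : \pi \mid P_{g,h}\}$ --- governed by the single linear condition $g \equiv X^{\bar n} h \pmod{\pi}$ on the coefficients of $g$ --- then sum over $\pi$ and use inclusion--exclusion to isolate the $P_{g,h}$ with exactly one degree-$n$ factor.

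The principal obstacle, and the reason the statement remains a conjecture, is the mismatch between the size of $T$ and the degree of $P_{g,h}$: $T$ has only polynomially many elements in $n$, while $\deg P_{g,h}$ is of order $n$ and can reach $\approx qn$, so the congruence $g \equiv X^{\bar n} h \pmod{\pi}$ has modulus of degree $n$ but leaves only $d+1 \approx 2\log_q n$ coefficients of $g$ free. Such a congruence does not behave like a generic linear condition, and every available effective equidistribution or large-sieve input over function fields carries error terms that swamp the main term $|T|/n$ unless the parameter count grows at least linearly in the degree; equivalently, proving that the splitting field of $P(X; \mathbf a, \mathbf b)$ has a large enough Galois group, with the specialisation variables confined to this narrow range, is exactly what is missing. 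A realistic intermediate target would be to prove the conjecture with $d$ permitted to grow like a small power of $n$, or for all sufficiently large $n$ with $h$ restricted to a single well-chosen family, where the parameter count is more favourable.
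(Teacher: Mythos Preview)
Your treatment is appropriate: the statement is a conjecture, and the paper does not prove it either. The paper's own discussion is confined to the same heuristic you give in your first two steps --- it bounds $\#T$ from below by a quantity of order $n^4$ (via the coprimality probability $1-1/q$) and invokes the asymptotic $P_q(m,n)\sim 1/n$ for the probability that a random polynomial of degree $m\ge n$ has a degree-$n$ irreducible factor, concluding that one expects on the order of $n^3$ good pairs. Your account of this heuristic matches the paper's, and your additional third step (Chebotarev/sieve strategies) together with your diagnosis of the obstruction --- that the number of free coefficients is only logarithmic in the modulus degree, so standard equidistribution error terms dominate --- goes beyond what the paper offers. One minor quantitative point: your claim that $P_{g,h}$ has at most $q-1$ irreducible factors of degree $n$ is slightly off, since $\deg P_{g,h}\le \bar n + d < qn + d$ allows up to $q$ such factors when $d\ge 1$, but this does not affect the heuristic.
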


To give some evidence for this conjecture, we first obtain a rough lower bound for the order of $T$. See \cite{Benjamin} for the next lemma, regarding the probability that two polynomials in $\mathbb{F}_q[X]$ are relatively prime.

\begin{lemma} \label{PROBCOP}
Let $g$ and $h$ be randomly chosen from the set of polynomials in $\mathbb{F}_q[X]$ of degree $a$ and $b$ respectively, where $a$ and $b$ are not both zero. Then the probability that $g$ and $h$ are relatively prime is $1-1/q$.
\end{lemma}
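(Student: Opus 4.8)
The plan is to turn this into a counting identity over $\mathbb{F}_q[X]$ and solve it with a generating function. First I would reduce to monic polynomials: writing $g = c\,G$ and $h = c'\,H$ with $c=\mathrm{lc}(g)$, $c'=\mathrm{lc}(h)$ independent and uniform in $\mathbb{F}_q^\times$ and $G,H$ uniform monic of degrees $a,b$, one has $\gcd(g,h)\sim\gcd(G,H)$, so the probability in question equals the probability that two uniformly random \emph{monic} polynomials of degrees $a$ and $b$ are coprime. Let $N(a,b)$ denote the number of coprime pairs of monic polynomials of degrees $a$ and $b$; since there are $q^a$ monic polynomials of degree $a$, the goal becomes $N(a,b)=(1-1/q)\,q^{a+b}$ when $a,b\ge 1$, the remaining boundary cases being immediate.

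Next I would exploit unique factorisation. Any pair $(G,H)$ of monic polynomials of degrees $a,b$ factors uniquely as $G=DG'$, $H=DH'$ with $D=\gcd(G,H)$ monic of some degree $k\le\min(a,b)$ and $(G',H')$ a coprime pair of monic polynomials of degrees $a-k$ and $b-k$; conversely every triple $(D,G',H')$ of this shape arises exactly once. Counting both sides gives the recursion
\[ q^{a+b}=\sum_{k\ge 0} q^k\,N(a-k,\,b-k). \]
Passing to the generating function $F(x,y)=\sum_{a,b\ge 0}N(a,b)\,x^ay^b$ converts this convolution into a product, $\tfrac{1}{(1-qx)(1-qy)}=\tfrac{1}{1-qxy}\,F(x,y)$, whence
\[ F(x,y)=\frac{1-qxy}{(1-qx)(1-qy)}=\frac{1}{(1-qx)(1-qy)}-\frac{qxy}{(1-qx)(1-qy)}. \]
Extracting the coefficient of $x^ay^b$ then yields $N(a,b)=q^{a+b}-q^{a+b-1}$ as soon as $a,b\ge 1$, which is exactly the claim. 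Equivalently, one can avoid generating functions entirely and prove $N(a,b)=(1-1/q)q^{a+b}$ by induction on $\min(a,b)$ directly from the displayed recursion.

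There is no deep obstacle here — the statement is elementary and is in any case quoted from \cite{Benjamin} — so I would keep the write-up short. The only points that merit a sentence of justification are that the map $(G,H)\mapsto\bigl(\gcd(G,H),\,G/\gcd(G,H),\,H/\gcd(G,H)\bigr)$ is a bijection onto triples of the stated form (which uses that $\mathbb{F}_q[X]$ is a unique factorisation domain and the convention that gcd's of monic polynomials are taken monic), and the bookkeeping at the extremes $k=\min(a,b)$ and $a$ or $b$ equal to $0$, which is precisely what makes the coefficient extraction above legitimate.
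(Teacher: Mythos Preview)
The paper does not prove this lemma at all; it merely quotes it from \cite{Benjamin}. Your argument is correct and is essentially the standard one (and indeed is the argument in the cited reference): extract the monic gcd to set up the convolution identity $q^{a+b}=\sum_{k\ge 0} q^k N(a-k,b-k)$, then solve it either by generating functions or by a one-line induction. So there is nothing to compare against, and your write-up is fine.

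One small caveat worth flagging, though it is a defect of the lemma as stated rather than of your proof: when exactly one of $a,b$ equals $0$, a degree-$0$ polynomial is a unit, so every pair is coprime and the probability is $1$, not $1-1/q$. Your phrase ``the remaining boundary cases being immediate'' papers over this; the honest statement (and the one your generating-function computation actually yields) is that the probability equals $1-1/q$ precisely when $a,b\ge 1$. This does not affect the paper's application, where both degrees are positive.
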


Clearly, every pair $(g,h) \in \mathbb{F}_q[X]^2$ with $\deg g = d$, $\deg h = d-1$ and $\gcd(g,h)=1$ is an element of $T$. Thus, Lemma~\ref{PROBCOP}. gives
\begin{align}
\# T & \geq \left(1-\frac{1}{q} \right) \cdot (q-1)q^d \cdot (q-1)q^{d-1} \notag \\
& \geq \frac{(q-1)^3}{q^2} q^{4 \log_q n } = \frac{(q-1)^3}{q^2} n^4. \label{eqn9}
\end{align}

Now, consider the following result from \cite{Gao}:

\begin{lemma} \label{PROBIRR}
Let $P_q(m,n)$ be the probability of a random polynomial in $\mathbb{F}_q[X]$ of degree $m \geq n$ having at least one irreducible factor of degree $n$. Then
\[ P_q(m,n) \sim \frac{1}{n}, \quad \text{as } \: \: n \to \infty, \] uniformly for $q$ and $m \geq n$.
\end{lemma}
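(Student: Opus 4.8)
The plan is to work with the complementary event and to estimate the probability $1-P_q(m,n)$ that a random monic polynomial of degree $m$ has \emph{no} irreducible factor of degree $n$. First I would reduce to monic polynomials: multiplying by a nonzero scalar does not change the set of monic irreducible factors and the scaling acts freely, so $P_q(m,n)$ is the proportion of the $q^m$ monic polynomials of degree $m$ that have an irreducible factor of degree $n$. Let $I_n$ be the number of monic irreducibles of degree $n$ over $\mathbb{F}_q$. Unique factorisation gives $\sum_{k\ge 0}q^kz^k=(1-qz)^{-1}=\prod_{P}(1-z^{\deg P})^{-1}$ as formal power series, the product over monic irreducibles $P$; deleting the degree-$n$ factors shows that the generating function enumerating monic polynomials with no irreducible factor of degree $n$ is $(1-z^n)^{I_n}/(1-qz)$. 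Extracting the coefficient of $z^m$ and dividing by $q^m$ yields the exact identity
\[ 1-P_q(m,n)=\sum_{k=0}^{\lfloor m/n\rfloor}(-1)^k\binom{I_n}{k}q^{-nk}. \]

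Next I would analyse this finite alternating sum. Put $a_k=\binom{I_n}{k}q^{-nk}>0$. Since $I_n<q^n$ (valid once $n\ge 2$), one has $a_{k+1}/a_k=(I_n-k)/((k+1)q^n)<1$, so $(a_k)$ is strictly decreasing, and $a_k\le (I_nq^{-n})^k/k!\le 1/(k!\,n^k)\to 0$. Hence the infinite series $\sum_{k\ge 0}(-1)^ka_k$ converges and every partial sum lies within $a_{K+1}$ of its value. Because $m\ge n$ forces $\lfloor m/n\rfloor\ge 1$, the difference between $1-P_q(m,n)$ and $1-a_1$ is an alternating sum of the decreasing positive terms $a_2,a_3,\ldots$, hence bounded by $a_2\le \frac12(I_nq^{-n})^2\le 1/(2n^2)$; crucially this bound involves only $n$, which is where the uniformity in $q$ and in $m\ge n$ comes from. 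Thus $1-P_q(m,n)=1-I_nq^{-n}+O(1/n^2)$ uniformly.

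Finally I would insert the classical count $I_n=\frac1n\sum_{d\mid n}\mu(d)q^{n/d}$, which gives $\frac1n-\frac{2}{nq^{n/2}}\le I_nq^{-n}\le\frac1n$, so $I_nq^{-n}=\frac1n+O(n^{-1}2^{-n/2})$ uniformly in $q\ge 2$. Combining, $1-P_q(m,n)=1-\frac1n+O(1/n^2)$ uniformly, whence $P_q(m,n)=\frac1n+O(1/n^2)\sim\frac1n$ as $n\to\infty$, uniformly for $q$ and $m\ge n$. I do not expect a serious obstruction here; the one point requiring care is organising the argument so that every error term depends on $n$ alone even though $m$ ranges over all values $\ge n$, and the monotonicity of $(a_k)$ — which lets one truncate the alternating sum already at $k=1$ — is precisely what secures this, leaving only routine bookkeeping (and the standard justification of the Euler-product identity and the monic reduction).
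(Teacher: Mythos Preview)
Your argument is correct and self-contained. The paper, however, does not prove this lemma at all: it simply quotes the statement as a known result from Gao~\cite{Gao} and uses it heuristically to support Conjecture~\ref{CONJ}. So there is no ``paper's own proof'' to compare against.

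That said, your approach is the natural one and worth recording. The key steps---passing to monic polynomials, writing the generating function for $n$-free monic polynomials as $(1-z^n)^{I_n}/(1-qz)$, extracting the exact formula $1-P_q(m,n)=\sum_{k=0}^{\lfloor m/n\rfloor}(-1)^k\binom{I_n}{k}q^{-nk}$, and then exploiting the monotonicity of the terms to truncate at $k=1$ with error $\le a_2\le 1/(2n^2)$---are all sound, and your care in making every error bound depend only on $n$ (via $I_n q^{-n}\le 1/n$ and $I_n q^{-n}=1/n+O(n^{-1}2^{-n/2})$) is exactly what the uniformity clause demands. The only cosmetic point is that the strict decrease $a_{k+1}/a_k<1$ uses $I_n<q^n$, which fails at $n=1$; but since the claim is asymptotic as $n\to\infty$ this is harmless.
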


If we model $X^{\bar n}h(X)-g(X)$ as a random polynomial in $\mathbb{F}_q[X]$ for each $(g,h) \in T$, Lemma~\ref{PROBIRR}, in conjunction with \eqref{eqn9}, suggests that for large $n$, we expect on the order of $n^3$ pairs $(g,h) \in T$ such that $X^{\bar n}h(X)-g(X)$ has an irreducible factor of degree $n$. Thus it is plausible that at least one such pair exists.

\section{Proof of Theorem~\ref{SHIFT}} \label{sec:shift}

We now restrict the field $\mathbb{F}$ to having characteristic 0. The key tool of this section is Theorem \ref{ABC}, and so the results could perhaps be extended to characteristic $p$, given stronger conditions to ensure that one of the polynomials $A$, $B$ or $C$, to which we apply the theorem, has non-vanishing derivative. 

We now prove Theorem~\ref{SHIFT}. Suppose $F_1(X,u(X)),\ldots,F_n(X,u(X))$ are multiplicatively dependent, and and assume that no proper subset of these is also multiplicatively dependent, as we can remove functions until this is the case. Then every zero and pole of $F_i$ for $1 \leq i \leq n$ must be a zero or pole of $F_j$ for some $j \neq i$. This is because otherwise we would require $k_i = 0$ in the equation
\begin{equation} \label{eqn10}
\prod_{\ell = 1}^n F_{\ell}(X,u(X))^{k_{\ell}}=1,
\end{equation}
and hence the proper subset $\{ F_{\ell}(X,u(X) : 1 \leq \ell \leq n, \: \ell \neq i \}$ would be multiplicatively dependent. Hence, if $\alpha$ is a zero or pole or $F_i(X,u(X))$, there exists $j \neq i$ such that $F_i(\alpha,Y)$ and $F_j(\alpha,Y)$ have the common zero or pole $u(\alpha)$, giving $R_{ij}(\alpha)=0$. Thus, any zero or pole of $F_i(X,u(X))$ for $1 \leq i \leq n$ is a zero of $\prod_{1 \leq i < j} \prod_{i < j \leq n} R_{ij}$. In particular, since for all $i \neq j$, $R_{ij}$ is not identically zero, we have
\begin{equation} \label{eqn11}
\deg \rad \prod_{i=1}^n G_i(X,u(X))H_i(X,u(X)) \leq \sum_{1 \leq i < j} \sum_{i < j \leq n} \deg R_{ij} = E.
\end{equation}

Now, for $1 \leq i \leq n$, write
\[ F_i(X,Y) = \frac{G_i(X,Y)}{H_i(X,Y)} = \frac{\sum_{\nu=0}^{e_i}g_{i,\nu}(X)Y^{\nu}}{\sum_{\nu=0}^{e_i}h_{i,\nu}(X)Y^{\nu}}, \]
and assume, without loss of generality, that $g_{i,e_i}$ is not identically zero (if it is, we can replace $G_i$ with $H_i$, and $g_{i,e_i}$ with $h_{i,e_i}$ in the following definitions). For $1 \leq i < j \leq n$, define
\[ P(X) = g_{i,e_i}(X)G_j(X,u(X)), \quad Q(X) = g_{j,e_j}(X)u(X)^{e_j-e_i}G_i(X,u(X)), \]
and $D_{ij}(X) = \gcd(P(X),Q(X))$. Then set
\[ A(X) = \frac{P(X)}{D_{ij}(X)}, \quad B(X) = -\frac{Q(X)}{D_{ij}(X)}, \quad C(X) = -(A(X)+B(X)). \]
Then $A,B$, and $C$ are relatively prime polynomials with $A+B+C=0$. Suppose $\deg u \geq d_n$. Then 
\begin{equation} \label{eqn12}
\deg A = \deg P - \deg D_{ij} = \deg g_{i,e_i} + \deg g_{j,e_j}+e_j \deg u - \deg D_{ij},
\end{equation} 
which is positive as $R_{ij} \not\equiv 0$ ensures that $P \nmid Q$ and so $\deg D_{ij} < \deg P$. Thus $A$ has non-vanishing derivative. Moreover, in $C$, the term in $u(X)^{e_j}$ cancels out, giving 
\begin{equation} \label{eqn13}
\begin{split}
\deg C & \leq (e_j-1) \deg u \\
& + \max \{ \deg g_{i,e_i}+ \deg g_{j,e_j-1}, \deg g_{j,e_j}+\deg g_{i,e_i-1} \} - \deg D_{ij}.
\end{split}
\end{equation}
Therefore, we have by Lemma~\ref{ABC} and \eqref{eqn12},
\begin{align*}
\deg A & = \deg g_{i,e_i} + \deg g_{j,e_j}+e_j \deg u - \deg D_{ij} \\
& \leq \max \{ \deg A, \deg B, \deg C \} \\
& \leq \deg \rad ABC - 1 \\
& \leq \deg \rad G_iG_j + \deg g_{i,e_i} + \deg g_{j,e_j} + \deg C - 1.
\end{align*}
Then, \eqref{eqn11} and \eqref{eqn13} give
\begin{multline*}
e_j \deg u - \deg D_{ij} \leq E + (e_j-1) \deg u + \\ \max \{ \deg g_{i,e_i}+ \deg g_{j,e_j-1}, \deg g_{j,e_j}+\deg g_{i,e_i-1} \} - \deg D_{ij}
\end{multline*}
and hence,
\begin{align*}
\deg u & \leq E + \max \{ \deg g_{i,e_i}+ \deg g_{j,e_j-1}, \deg g_{j,e_j}+\deg g_{i,e_i-1} \} - 1 \\
& \leq E + 2d_n - 1. 
\end{align*}
Therefore, for $1 \leq i \leq n$, $G_i(X,u(X))$ is a product of at most $E$ distinct irreducible factors, with degree not exceeding $e_n(E+2d_n-1)+d_n$. If $w_0, \ldots ,w_{E-1}$ are the respective multiplicities of said factors, then up to multiplication by a non-zero constant, the number of possibilities for $G_i(X,u(X))$ is at most the number of non-negative integer solutions to the inequality
\[ \sum_{j=0}^{E-1} w_j \leq e_n(E+2d_n-1)+d_n, \]
which is at most $\binom{e_n(E+2d_n-1)+E+d_n}{E}$ from Lemma~\ref{DIO}. For each such possibility, say
\[ G_i(X,u(X)) = \sum_{j=0}^{d_i} \sum_{k=0}^{e_i} a_{jk} X^j u(X)^k = A \prod_{\ell=0}^{E-1} (X-\alpha_{\ell})^{b_\ell}, \]
if $u$ is monic then $A$ is uniquely determined. Moreover, we have
\[ u(X) \mid A \prod_{\ell=0}^{E-1} (X-\alpha_{\ell})^{b_\ell} - \sum_{j=0}^{d_i} a_{j0} X^j, \]
so there are finitely many possibilities for monic $u$.

For corollary \ref{POLYSHIFT}, we have $F_i(X,Y)=G_i(X,Y)=f_i(X)+Y$, giving, $R_{ij}(X)=f_j(X)-f_i(X)$ and $\deg R_{ij} \leq d_n$. Therefore $E \leq \frac{n(n-1)}{2}d_n=C$. Noting that $e_n=1$ in this case, up to constants there are at most $\binom{2C+3d_n-1}{C}$ possibilities for $f_i(X)+u(X)$, and hence for $u$. This completes the proof. \QEDB \medskip

\section{Comments}

Considering the case $\nu < \mu$ (which encompasses the polynomial case) of Theorem~\ref{MAIN}, and additionally Theorem~\ref{ZEROS}, it is of interest to obtain upper bounds for the value $e$ when it is finite. That is, bounds for the period of $0$ under iteration of a polynomial or rational function $f$. This problem is investigated in various contexts in \cite{Canci, Koch, Nark1, Nark2, Nark3, Pezda1}. Bounds on the values of the values of $\epsilon$, $\mu$ and $\nu$ in the rational function case are similarly of interest.

Another problem is to generalise Theorem \ref{ZEROS} to rational functions. Our approach used for the polynomial case can plausibly be extended to the situation where $\nu \leq \mu$, mirroring the proof of the relevant case in Theorem \ref{MAIN}, but applying an appropriate version of the main theorem in \cite{FucPet}. Such an extension, however, is not immediate for the case $\mu < \nu$.

Also, note that in the case $\mathbb{F}=\mathbb{C}$, Theorem~\ref{SHIFT} may be able to be generalised to several variables, where $F_i \in \mathbb{C}(X_1,\ldots,X_m,Y)$ and $u \in \mathbb{C}[X_1,\ldots,X_m]$, using an appropriate analogue of Mason's theorem (for example \cite[Theorem~2]{Bayat}).

\section*{Acknowledgement}

The author is grateful to Alina Ostafe and Igor Shparlinski for their ideas, comments and encouragement. He would also like to thank the referee for a careful reading and valuable suggestions.

\Address

\end{document}